\newcommand{\gl}{{\mathfrak g \mathfrak l}}
\newcommand{\su}{{\mathfrak s  \mathfrak u}}
\newcommand{\cx}{{\mathbb C}}
\newcommand{\tr}{\operatorname{tr}}
\numberwithin{equation}{section}
\newtheorem{theorem}{Theorem}[section]
\newtheorem{corollary}[theorem]{Corollary}
\newtheorem{proposition}[theorem]{Proposition}
\theoremstyle{remark}
\newtheorem{remark}[theorem]{Remark}
\newtheorem{definition}[theorem]{Definition}
\newtheorem{example}[theorem]{Example}
\newcommand{\oC}{{\mathbb{C}}}
\newcommand{\oH}{{\mathbb{H}}}
\newcommand{\oP}{{\mathbb{P}}}
\newcommand{\oR}{{\mathbb{R}}}
\newcommand{\sF}{{\mathcal{F}}}
\newcommand{\sM}{{\mathcal{M}}}   
\newcommand{\sN}{{\mathcal{N}}}
\newcommand{\sO}{{\mathcal{O}}}
\newcommand{\fU}{{\mathfrak{u}}}
\begin{document}

\title[Transverse Hilbert schemes and bi-Hamiltonian systems]{Transverse Hilbert schemes, bi-Hamiltonian systems, and hyperk\"ahler geometry}
\author{Roger Bielawski}
\dedicatory{Dedicated to the memory of Sir Michael Francis Atiyah (1929-2019)}
\address{Institut f\"ur Differentialgeometrie,
Leibniz Universit\"at Hannover,
Welfengarten 1, 30167 Hannover, Germany}

\thanks{The author  is a member of the DFG Priority
Programme 2026 ``Geometry at infinity".}


\begin{abstract} We  give a characterisation of Atiyah's and Hitchin's transverse Hilbert schemes of points on a symplectic surface in terms of bi-Poisson structures. Furthermore, we describe the geometry of hyperk\"ahler manifolds arising from the transverse Hilbert scheme construction, with particular attention paid to the monopole moduli spaces.
\end{abstract}

\maketitle

\thispagestyle{empty}

\section{Introduction}

In chapter 6 of the monograph \cite{AH}, Atiyah and Hitchin consider the following construction. Let $Y$ be a complex symplectic surface with a  holomorphic submersion $\pi$ onto a $1$-dimensional complex manifold $X$. They associate to it an open subset of the Hilbert scheme of $n$ points on $Y$ consisting of  $0$-dimensional complex subspaces $D$ of length $n$ such that $\pi_{|D}$ is an isomorphism onto its scheme-theoretic image. They observe that this {\em transverse Hilbert scheme} $Y^{[n]}_\pi$ is a symplectic manifold equipped with holomorphic submersion $\pi^{[n]}$ onto $S^n X$, the fibres of which are Lagrangian submanifolds. In particular, if $X$ is a domain in $\cx$, then  the components of $\pi^{[n]}$ define $n$ functionally independent and Poisson-commuting Hamiltonians on $Y^{[n]}_\pi$, i.e.\ a completely integrable system. Atiyah and Hitchin observe further that sometimes one can perform this construction on the fibres of the twistor space of a $4$-dimensional hyperk\"ahler manifold and obtain 
a new twistor space which then might lead to a new hyperk\"ahler manifold. Their main example of this construction is $Y=\cx^\ast \times \cx$ with $\pi$ the projection onto the second factor. The corresponding transverse Hilbert scheme is the space of based rational maps of degree $n$ and the hyperk\"ahler metric resulting from applying the construction to the twistor space of $S^1\times \oR^3$ is the $L^2$-metric on the moduli space of Euclidean monopoles of charge $n$. Further examples of this construction are given in \cite{Man,BiCM}.
\par
The purpose of this article is to characterise both symplectic and  hyperk\"ahler manifolds arising from this construction. Partial results in this direction have been obtained in \cite{limit} and in \cite{Nicc1, Nicc2}.  They rely on the existence of a certain endomorphism of the tangent bundle of $Y^{[n]}_\pi$. In the present work our point of view is different. We observe that $Y^{[n]}_\pi$ is equipped with a second Poisson structure, compatible with the symplectic form. Thus $Y^{[n]}_\pi$ is a completely integrable {\em bi-Hamiltonian system}. We then show that a nondegenerate bi-Poisson manifold $M^{2n}$ arises as an (open subset) of a transverse Hilbert scheme on a symplectic surface $Y$ with a submersion $Y\to\cx$ essentially exactly then, when the coefficients of the minimal polynomial of the corresponding recursion operation (see \S\ref{bi} for a definition) form a submersion to $\cx^n$. 
\par
We then turn our attention to hyperk\"ahler manifolds arising from the transverse Hilbert scheme construction on the fibres of the twistor space a $4$-dimensional hyperk\"ahler manifold with a tri-Hamiltonian vector field. We show that the essential feature of the geometry of a manifold $M$ arising from this construction is the existence of a bivector $\Pi$ on $M$ which lies in Salamon's component $\Lambda^2E\otimes S^2 H$ of $\Lambda^2T^\cx M$ and satisfies ${\rm D}\Pi=0$, where ${\rm D}$ is the Penrose-Ward-Salamon differential operator on $\Lambda^2E\otimes S^2 H$ \cite{Sal}.
The bivector $\Pi$ is not Poisson, but  its $(2,0)$-component with respect to each complex structure is a (generically log-symplectic) holomorphic Poisson bivector. Moreover, this holomorphic Poisson bivector is compatible with the parallel holomorphic symplectic form arising from the hyperk\"ahler structure.
\par
In the last section we identify the bivector $\Pi$ on moduli spaces of $SU(2)$-monopoles, i.e.\ hyperk\"ahler transverse Hilbert schemes on $S^1\times \oR^3$, in terms of solutions to Nahm's equations.

\section{Transverse Hilbert schemes and bi-Hamiltonian systems}

\subsection{Transverse Hilbert schemes\label{trH}}

Let $X$ be a complex manifold, $C$ a complex manifold of dimension $1$, and $\pi:X\to C$ a holomorphic map. The {\em transverse Hilbert scheme} $X^{[n]}_\pi$ of $n$ points in $X$ is an open subset of the full Hilbert scheme $X^{[n]}$ consisting of those $D\in X^{[n]}$ such that $\pi_{|D}$ is an isomorphism onto its scheme-theoretic image \cite{AH}. Since $C^{[n]}=S^n C$, this simply means that $\pi(D)$ consists of $n$ points (with multiplicities).
First of all, observe that $X^{[n]}_\pi$ is always smooth, unlike the full Hilbert scheme $X^{[n]}$:
\begin{proposition} Let $\pi:X\to C$ be a holomorphic map from a complex manifold $X$ to a $1$-dimensional complex manifold $C$. Then the transverse Hilbert scheme $X^{[n]}_\pi$ is smooth.
\end{proposition}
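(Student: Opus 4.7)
The plan is to exhibit, around an arbitrary $D \in X^{[n]}_\pi$, an explicit analytic chart. Two preliminary observations about the transverse condition carry the conceptual weight. First, ``$\pi_{|D}$ is an isomorphism onto its image'' forces $\pi$ to be injective on $\operatorname{supp}(D)$. Second, at every support point $p$ of $D$ of length $k\ge 2$, the map $\pi$ must actually be a submersion: writing $f = \pi^\ast t$ for a local coordinate $t$ on $C$ at $\pi(p)$ and letting $I\subset \sO_{X,p}$ be the ideal of $D$ at $p$, the length condition $\dim_{\cx}\sO_{X,p}/I = k$ gives $\mathfrak{m}_{X,p}^{k}\subset I$; were $f$ to lie in $\mathfrak{m}_{X,p}^2$, then $f^{k-1}\in \mathfrak{m}_{X,p}^{2(k-1)}\subset \mathfrak{m}_{X,p}^{k}\subset I$, contradicting the requirement that the composition $\sO_{C,\pi(p)}\to \sO_{X,p}/I$ be an isomorphism onto a $\cx$-algebra of length $k$.

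With these observations, I would invoke the standard local product decomposition of Hilbert schemes: in a small enough analytic neighbourhood of $D$, the space $X^{[n]}$ is biholomorphic to $\prod_{i=1}^{r} X^{[k_i]}$, where $D = \bigsqcup D_i$ is the decomposition of $D$ by its distinct support points $p_i$ and $k_i = \operatorname{length}(D_i)$. Since the $\pi(p_i)$ are pairwise distinct, the transverse condition decouples across the factors, so this decomposition restricts to a local product chart of $X^{[n]}_\pi$. It therefore suffices to produce a chart around each $D_i$ inside $X^{[k_i]}_\pi$. The case $k_i = 1$ is trivial because $X^{[1]}_\pi = X$ is smooth. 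For $k_i\ge 2$, the submersion property from the first step supplies coordinates identifying a neighbourhood of $p_i$ with $U\times V\subset \cx\times\cx^{\dim X - 1}$ in which $\pi$ is projection onto the first factor. A length-$k_i$ subscheme satisfying the transverse condition is then precisely the graph of a morphism from its image divisor $D'_C\in S^{k_i}U$ to $V$, and the Weierstrass presentation encodes such a graph as the data of a monic polynomial $P(z)$ of degree $k_i$ together with $\dim X -1$ polynomials $q_j(z)$ of degree $<k_i$; this identifies a neighbourhood of $D_i$ in $X^{[k_i]}_\pi$ with an open subset of $\cx^{k_i\dim X}$.

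The main obstacle is checking carefully that the Weierstrass parametrisation is a biholomorphism onto an open subset of $X^{[k_i]}_\pi$: one must verify that every transverse subscheme of length $k_i$ supported near $p_i$ admits such a presentation and that distinct tuples $(P,q_1,\dots,q_{\dim X-1})$ yield distinct subschemes. A cleaner alternative, which I would develop in parallel, is to argue that the natural map $\sigma : X^{[n]}_\pi\to S^nC$, $D\mapsto \pi(D)$, is a smooth morphism with smooth fibres — the fibre over $\sum n_i[c_i]$ being a product of jet spaces of the submersion fibres $\pi^{-1}(c_i)$. Since $\dim C = 1$ makes $S^nC$ itself smooth, smoothness of $\sigma$ then transfers to the total space $X^{[n]}_\pi$.
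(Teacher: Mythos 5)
Your main argument is correct, but it takes a genuinely different and much more hands-on route than the paper. The paper's proof is two lines: since $D\simeq\pi(D)\in S^nC$, each punctual component of $D$ is curvilinear (isomorphic to $\operatorname{Spec}\cx[t]/(t^k)$), hence a local complete intersection, and the Hilbert scheme is then unobstructed at $D$ because the obstruction space $H^1(D,\sN_{D/X})$ vanishes for $0$-dimensional $D$; so the full $X^{[n]}$ is already smooth along the open subset $X^{[n]}_\pi$. Your route instead builds explicit charts. Your two preliminary observations are correct and worth keeping: the argument that $\mathfrak{m}_{X,p}^{k}\subset I$ and hence $f\notin\mathfrak{m}_{X,p}^2$ at a support point of length $k\ge 2$ is exactly right, and it isolates cleanly why a non-reduced transverse subscheme forces $\pi$ to be submersive along its support (the paper only remarks that Atiyah and Hitchin assume $\pi$ is a submersion from the outset). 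The local product decomposition and the Weierstrass/graph parametrisation $(P,q_1,\dots,q_{\dim X-1})$ then give more than the paper's proof does --- they produce the explicit coordinates that the paper only records later in its local description of $U^{[n]}_\pi$ by pairs of polynomials --- at the cost of the verification you flag yourself: that the parametrisation is an isomorphism of analytic spaces (compatible with the universal family), not merely a bijection on points. That verification is standard but should not be waved away, since it is where the scheme structure of $X^{[n]}$ actually enters.

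One caution about your ``cleaner alternative'': the map $\sigma=\pi^{[n]}:X^{[n]}_\pi\to S^nC$ is \emph{not} in general a smooth morphism, and its fibres need not be smooth, when $\pi$ has critical points. Already for $n=1$, where $X^{[1]}_\pi=X$ and $\sigma=\pi$, take $X=\cx^2$ and $\pi(x,y)=xy$: the fibre over $0$ is singular at the origin. More generally, any reduced support point of $D$ lying at a critical point of $\pi$ destroys the smoothness of $\sigma$ even though the total space is smooth. So the alternative only proves the proposition under the additional hypothesis that $\pi$ is a submersion; your primary chart argument, which treats the $k_i=1$ factors separately as $X^{[1]}_\pi=X$, is the one that establishes the statement in the generality claimed.
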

\begin{proof} Since $D\in X^{[n]}_\pi$ satisfies $D\simeq \pi(D)\in S^nC$, such a $D$ is a local complete intersection (l.c.i.). Now the claim follows from general results of deformations theory (see, e.g., \cite[Theorem 1.1.(c)]{Hart}). 
\end{proof}
The transverse Hilbert scheme comes equipped with a canonical map $\pi^{[n]}:X^{[n]}_\pi\to S^nC$. If $\pi$ is a submersion, then so is $\pi^{[n]}$. In this case, 
points of $X^{[n]}_\pi$ such that $\pi(D)=n_1p_1+\dots+ n_kp_k$, with $p_1,\dots,p_k$ distinct points of $C$, correspond to a choice of a section $s_i$ of $\pi$ in a neighbourhood of each $p_i$, truncated to order $n_i$ (in other words $s_i$ is an $(n_i-1)$-jet of sections at $p_i$). Let us remark that Atiyah and Hitchin consider only the case when $\pi$ is a submersion (they say $\pi$ is a ``complex fibration", and the proof of the smoothness of $X^{[n]}_\pi$, given on p.\ 53 in \cite{AH}, makes clear that $\pi$ must be a submersion).

Suppose now that $X$ has a symplectic structure. If $\dim X=2$, then a theorem of Beauville \cite{Beau} implies that $X^{[n]}$ (which is smooth, owing to a well-known result of Fogarty), and hence $ X^{[n]}_\pi$, carries an induced symplectic structure. For higher dimensional $X$,  there is no induced symplectic structure on $X^{[n]}$, not even on its smooth locus.

\subsection{Log-symplectic Poisson structures}

A Poisson structure on a (smooth or complex) manifold $M^{2n}$ is given by a bivector $\Pi\in \Gamma(\Lambda^2 TM)$ such that the Schouten bracket $[\Pi,\Pi]$ vanishes. The {\em symplectic locus} of the Poisson structure is the set of points $m$ where the induced map $\#_\Pi:T_m^\ast M\to T_mM$ is an isomorphism. Its complement is called the {\em degeneracy locus}. A Poisson structure is called {\em log-symplectic} if $\Pi^n\in \Gamma(\Lambda^{2n }TM)$ meets the zero section of the $\Lambda^{2n }TM$ transversely. These structures were studied by Goto \cite{Goto} in the holomorphic case, and by  Guillemin, Miranda and Pires in the smooth category \cite{GMP} (see also \cite{Gua,Cav}). The name is justified by the fact that the dual $2$-form $\omega=\Pi^{-1}$ has a logarithmic singularity along the degeneracy locus. The degeneracy locus $\Delta$ of a log-symplectic Poisson structure is a smooth Poisson hypersurface with codimension one symplectic leaves and $M\backslash \Delta$ is a union of open symplectic leaves.
\par
We recall from \cite{GMP} that if $f$ is a local defining function for $\Delta$, then $\omega$ can be decomposed
as
\begin{equation}\omega=\alpha\wedge \frac{df}{f}+\beta,\label{normal}
\end{equation}
for a $1$-form $\alpha$ and a $2$-form $\beta$. Moreover, the restrictions of $\alpha$ and $\beta$ to $\Delta$ are closed, $\alpha_{|\Delta}$ is intrinsically defined and its kernel is the tangent space to the symplectic leaf of $\Pi$.

\subsection{Bi-Poisson structures \label{bi}}

A {\em bi-Poisson} structure on a (real or complex) manifold $M$ is a pair $(\Pi_1,\Pi_2)$ of linearly independent bivectors such that every linear combination of $\Pi_1$ and $\Pi_2$ is a Poisson structure. In other words $\Pi_1$ and $\Pi_2$ satisfy $[\Pi_1,\Pi_1]=0$,  $[\Pi_2,\Pi_2]=0$, $[\Pi_1,\Pi_2]=0$, where $[\ ,\ ]$ is the Schouten bracket.
\par
A bi-Poisson structure is called {\em nondegenerate}, if the pencil $t_1\Pi_1+t_2\Pi_2$ contains a symplectic structure. 
In what follows, we shall consider only nondegerate bi-Poisson structures and assume that $\Pi_1$ is symplectic. Following Magri and Morosi \cite{MM} (see also \cite{Rui}) we can define the {\em recursion operator} $R=\#_{\Pi_2}\circ \#_{\Pi_1}^{-1}$. It is an endomorphism of $TM$ and Magri and Morosi  show that 1) its Nijenhuis tensor vanishes; and 2) the eigenvalues of $R$ form a commuting family with respect to both  Poisson brackets.
\par
Furthermore, $\det R=(\mu_R)^2$ for a well defined function $\mu_R$ on $M$ ($\mu_R$ is the quotient of the Pfaffians of $\Pi_2$ and  of $\Pi_1$). Thus $\Pi_2$ is log-symplectic if and only if $0$ is a regular value of $\mu_R$. Since $(\Pi_1, \Pi_2-\lambda\Pi_1)$ is a nondegenerate bi-Poisson structure for each scalar $\lambda$, the characteristic polynomial of $R$ is of the form $\chi_R(\lambda)=\mu_R(\lambda)^2$. We shall refer to $\mu_R(\lambda)$ as the {\em Pfaffian polynomial} of R. We observe:
\begin{proposition} Let $(\Pi_1,\Pi_2)$ be a real (resp.\ holomorphic) bi-Poisson structure on a smooth (resp.\ complex) manifold $M^{2n}$ with $\Pi_1$ symplectic. If the coefficients of the Pfaffian polynomial of the recursion operator $R$ define a submersion $p:M\to \oR^{n}$ (resp.\ $p:M\to \oC^{n}$), then the Poisson structure $\Pi_2-\lambda\Pi_1$ is log-symplectic for every $\lambda$.\label{log}\hfill $\Box$
\end{proposition}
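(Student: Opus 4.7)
The plan is to reduce the log-symplectic condition for each $\Pi_2-\lambda\Pi_1$ to the statement that a specific non-trivial linear combination of $da_0,\dots,da_{n-1}$ is nowhere vanishing, where $a_0,\dots,a_{n-1}$ denote the non-leading coefficients of the Pfaffian polynomial $\mu_R(t)$.

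First I would observe that, for each scalar $\lambda$, the recursion operator of the (still nondegenerate) bi-Poisson pair $(\Pi_1,\Pi_2-\lambda\Pi_1)$ is $R-\lambda\cdot\mathrm{Id}_{TM}$, because $\#_{\Pi_2-\lambda\Pi_1}=\#_{\Pi_2}-\lambda\#_{\Pi_1}$. By the criterion recalled just before the proposition, $\Pi_2-\lambda\Pi_1$ is log-symplectic if and only if $0$ is a regular value of the corresponding Pfaffian function on $M$, namely the ratio $\operatorname{Pf}(\Pi_2-\lambda\Pi_1)/\operatorname{Pf}(\Pi_1)$, which is precisely $m\mapsto\mu_R(\lambda)(m)$ with $\lambda$ held fixed.

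Next, I would expand
\[ \mu_R(t)=\varepsilon\, t^n+a_{n-1}(m)\,t^{n-1}+\cdots+a_0(m), \]
where $\varepsilon=\pm 1$ is a constant; this leading term is indeed independent of $m$ because $\chi_R=\mu_R^2$ is monic of degree $2n$. The hypothesis that $p=(a_0,\dots,a_{n-1}):M\to\oR^n$ (resp.\ $\oC^n$) is a submersion means exactly that the covectors $da_0(m),\dots,da_{n-1}(m)$ are linearly independent in $T_m^\ast M$ for every $m\in M$.

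Finally, for fixed $\lambda$, the differential of the function $m\mapsto \mu_R(\lambda)(m)$ on $M$ is
\[ d\bigl(\mu_R(\lambda)\bigr)=\sum_{k=0}^{n-1}\lambda^k\, da_k. \]
Because the coefficient of $da_0$ is $1$, this is a nontrivial linear combination of $da_0,\dots,da_{n-1}$, and hence nonzero at every point of $M$. Consequently $\mu_R(\lambda):M\to\oR$ (resp.\ $\oC$) is itself a submersion, so $0$ is a regular value of it, and $\Pi_2-\lambda\Pi_1$ is log-symplectic. There is no serious obstacle: the only point requiring care is the constancy of the leading coefficient of $\mu_R(t)$, so that the differential involves only $da_0,\dots,da_{n-1}$, and this is immediate from $\chi_R=\mu_R^2$ being monic of degree $2n$.
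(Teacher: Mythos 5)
Your proof is correct and is exactly the argument the paper leaves implicit (the proposition is stated with an immediate $\Box$, relying on the preceding observation that $\Pi_2-\lambda\Pi_1$ is log-symplectic iff $0$ is a regular value of $m\mapsto\mu_R(\lambda)(m)$). Your spelling out of the details --- the recursion operator of $(\Pi_1,\Pi_2-\lambda\Pi_1)$ being $R-\lambda\,\mathrm{Id}$, the constancy of the leading coefficient, and the nonvanishing of $\sum_k\lambda^k\,da_k$ because the coefficient of $da_0$ is $1$ and the $da_k$ are pointwise independent --- is accurate and complete.
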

We also recall the following property of bi-Poisson structures, proved by Magri and Morosi in \cite{MM}:
\begin{proposition}[Magri-Morosi] Let $(M,\Pi_1,\Pi_2)$ be a  bi-Poisson manifold with $\Pi_1$ symplectic. 
Then, for any polynomial $\rho(z)$, the bivector $\Pi_\rho$ defined by
$$ \Pi_\rho(\alpha,\cdot)=\rho(R)\Pi_1(\alpha,\cdot),\enskip \alpha\in \Omega^1 (M)$$
defines a Poisson structure on $M$, compatible with $\Pi_1$.\hfill $\Box$\label{polPoi}
\end{proposition}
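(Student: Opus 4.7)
The proof factors into an algebraic reduction followed by an appeal to the Magri--Morosi theorem on Poisson--Nijenhuis manifolds. By bilinearity of the Schouten bracket and of the assignment $\rho\mapsto \Pi_\rho$, it is enough to prove the statement for $\rho(z)=z^k$, i.e.\ to show that for every $k\geq 0$ the endomorphism $\sharp_k:=R^k\sharp_{\Pi_1}$ is the sharp map of a bivector $\Pi_k$ and that $[\Pi_k,\Pi_l]=0$ for all $k,l\geq 0$; the general case then follows by writing $\Pi_\rho=\sum_k\rho_k\Pi_k$. The antisymmetry step is short: the fact that $\Pi_2=R\Pi_1$ is a bivector is equivalent to the identity $R\sharp_{\Pi_1}=\sharp_{\Pi_1}R^T$, which iterates to $R^k\sharp_{\Pi_1}=\sharp_{\Pi_1}(R^k)^T$ and, combined with the antisymmetry of $\sharp_{\Pi_1}$, shows that $\sharp_k$ comes from a bivector.

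The hypotheses $[\Pi_1,\Pi_1]=[\Pi_1,\Pi_2]=[\Pi_2,\Pi_2]=0$ together with $N_R=0$ make $(M,\Pi_1,R)$ a Poisson--Nijenhuis manifold in the sense of Magri and Morosi, and the content of their theorem is precisely that $[\Pi_k,\Pi_l]=0$ for all $k,l\geq 0$. I would prove this by induction on $k+l$, the three base cases $(0,0),(0,1),(1,1)$ being exactly the bi-Poisson hypotheses. The inductive step rests on a purely algebraic identity of the schematic form
\[
[R^{k+1}\Pi_1,R^l\Pi_1]=R\cdot[R^k\Pi_1,R^l\Pi_1]+Q(N_R;\Pi_k,\Pi_l),
\]
where $R$ is extended as a derivation to trivectors and $Q$ is a bilinear expression in $\Pi_k,\Pi_l$ whose coefficients are contractions involving $N_R$. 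Since $N_R=0$ by hypothesis, the second term vanishes, and iterating reduces $[\Pi_k,\Pi_l]$, modulo the $R$-action on trivectors, to one of the base cases.

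The main obstacle to writing out a fully self-contained proof is verifying this Nijenhuis identity inside the Schouten--Nijenhuis algebra of multivector fields; the computation is a somewhat delicate piece of tensor calculus. The cleanest modern treatment, due to Kosmann-Schwarzbach, recasts the situation cohomologically: she introduces a derivation $d_R$ of degree one on $\Omega^\bullet(M)$ satisfying $d_R^2=0$ iff $N_R=0$, and shows that Poisson--Nijenhuis compatibility is the commutator relation $[d,d_R]=0$. In that formulation the hierarchy of mutually compatible Poisson structures $\Pi_k$ emerges automatically by iterating $d_R$, avoiding the more painful direct computation on multivectors.
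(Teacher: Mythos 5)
The paper gives no proof of this proposition: it is stated with a terminal $\Box$ and explicitly recalled as a theorem of Magri and Morosi from \cite{MM}, so your attempt is being measured against a citation rather than an argument. Your outline is a correct sketch of the standard proof of that classical result, and in that sense it supplies strictly more than the paper does. The reduction to monomials $\rho(z)=z^k$ via bilinearity of the Schouten bracket is sound, and the antisymmetry step is right: antisymmetry of $\Pi_2$ is equivalent to $R\circ\#_{\Pi_1}=\#_{\Pi_1}\circ R^{\ast}$, which iterates to show that each $R^k\#_{\Pi_1}$ is the sharp map of a genuine bivector $\Pi_k$, and $\Pi_\rho=\sum_k\rho_k\Pi_k$ is then Poisson and compatible with $\Pi_1$ once all the $\Pi_k$ are pairwise compatible. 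Two remarks on the remaining step. First, be explicit that $N_R=0$ is not part of the definition of a nondegenerate bi-Poisson structure but is itself the Magri--Morosi theorem quoted in \S 2.3 of the paper; your induction therefore sits on top of that result, which is fine but should be said. Second, the inductive identity you write is only schematic, and verifying it in the Schouten--Nijenhuis algebra is exactly the nontrivial content of the hierarchy theorem; deferring it to the cohomological formulation (the derivation $d_R$ with $d_R^2=0\Leftrightarrow N_R=0$ and $[d,d_R]=0$ encoding compatibility) is a legitimate way to close the gap, and is an acceptable stopping point for a recalled classical statement.

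One further observation specific to this paper's setting: since $\Pi_1$ is symplectic, Fernandes' normal form \cite{Rui} (which the author invokes later in the proof of Theorem \ref{tr-sympl}) gives, on the open set where $R$ has $n$ distinct eigenvalues, coordinates in which $\Pi_1=\sum_i\partial_{z_i}\wedge\partial_{u_i}$ and $\Pi_2=\sum_i z_i\,\partial_{z_i}\wedge\partial_{u_i}$; there $\Pi_\rho=\sum_i\rho(z_i)\,\partial_{z_i}\wedge\partial_{u_i}$ is manifestly Poisson and compatible with $\Pi_1$. This gives a quick proof wherever that open set is dense, but it does not replace the general argument, since no genericity of the eigenvalues is assumed in the proposition.
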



\subsection{Transverse Hilbert schemes on symplectic surfaces\label{symplectic}}

 Beauville's  construction \cite{Beau} of a symplectic form on the Hilbert scheme $S^{[n]}$ of $n$ points in a symplectic surface $S$ has been extended by Bottacin \cite{Bot} to Poisson surfaces: any Poisson structure on a complex surface $S$ induces a Poisson structure on $S^{[n]}$. Therefore, if $(S,\omega)$ is a symplectic surface and $\pi:S\to \cx$ a holomorphic map, we obtain two Poisson bivectors on $S^{[n]}$: $\Pi_1$ induced by $\omega^{-1}$ and $\Pi_2$ induced by $\pi\cdot\omega^{-1}$, where $\omega^{-1}=\#_\omega\omega$ (i.e.\ the bivector dual to $\omega$) and $\pi$ is viewed as a function on $S$. Since the Poisson structures 
on $S$ are compatible, $\Pi_1$ and $\Pi_2$ are compatible (compatibility is trivial on the open dense subset where $D$ consists of distinct points, and hence $[\Pi_1,\Pi_2]$ vanishes everywhere). Observe that the corresponding recursion operator $R$ (cf.\ \S\ref{bi}) is the endomorphism of $TS^{[n]}$ given by the multiplication by $\pi$ on each $T_DS^{[n]}\simeq H^0(D,\sN_{D/S})$ where $\sN_{D/S}$ denotes the normal sheaf of $D$ in $S$. This is the endomorphism considered in \cite{limit,Nicc1, Nicc2}.  The coefficients of its Pfaffian polynomial  define a map $S^{[n]}\to S^n\cx\simeq \cx^n$. Its restriction to the transverse Hilbert scheme $S^{[n]}_\pi$ coincides with the canonical map $\pi^{[n]}$ introduced in \S\ref{trH}. Let us prove the following properties of $S^{[n]}_\pi$ and $\pi^{[n]}$.
\begin{proposition} Let $S$ be a complex symplectic surface with a holomorphic map $\pi:S\to\cx$. Then the transverse Hilbert scheme $S^{[n]}_\pi$ is a nondegenerate bi-Poisson manifold with the following properties:
\begin{itemize}
\item[(i)] the coefficients of the minimal polynomial of the corresponding recursion operator $R$ coincide with the canonical map $\pi^{[n]}:S^{[n]}_\pi\to \oC^{n}$;
\item[(ii)] at any point of its degeneracy locus, the Poisson structure $\Pi_2-\lambda\Pi_1$ has rank $2n-2$ ($\lambda\in \cx$);
\item[(iii)] on the subset of $\pi^{[n]}$-regular points, the Poisson structure $\Pi_2-\lambda\Pi_1$ is log-symplectic for every $\lambda\in \cx$.
\end{itemize}\label{que}
\end{proposition}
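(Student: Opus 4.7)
My plan is to exploit the fact that for every $D\in S^{[n]}_\pi$ the restriction $\pi|_D$ realises $\sO_D$ as a cyclic $\cx[z]$-module. By the very definition of the transverse Hilbert scheme, $\pi|_D$ gives an isomorphism $\sO_D\simeq\cx[z]/(P(z))$, where $P(z)$ is the unique monic polynomial of degree $n$ whose zero scheme is $\pi^{[n]}(D)\in S^n\cx$; in particular, the coefficients of $P$ are, up to sign, the components of $\pi^{[n]}$ under $S^n\cx\simeq\cx^n$. Since $D$ is a local complete intersection of codimension two in $S$, the normal sheaf $\sN_{D/S}$ is locally free of rank two over $\sO_D$, and because $D$ is a disjoint union of spectra of Artinian local rings, it is in fact globally free of rank two. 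Thus $T_DS^{[n]}_\pi\simeq H^0(D,\sN_{D/S})\simeq\sO_D^{\oplus 2}$ as $\sO_D$-modules, and the recursion operator $R$ (described in the paragraph preceding the statement as multiplication by $\pi|_D$) acts as multiplication by $z$ diagonally on this direct sum.

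With this setup, part (i) is a direct consequence of the cyclicity of $\sO_D$: the minimal polynomial of multiplication by $z$ on $\cx[z]/(P(z))$ is $P$ itself, and taking a direct sum of two copies leaves the minimal polynomial unchanged. For part (ii), a point of the degeneracy locus of $\Pi_2-\lambda\Pi_1$ is one where $P(\lambda)=0$. Decomposing $\sO_D\simeq\bigoplus_j \cx[z]/(z-a_j)^{M_j}$ into its Artinian local factors, the kernel of multiplication by $z-\lambda$ on $\sO_D$ is one-dimensional, spanned by the top power of the maximal ideal in the unique factor with $a_j=\lambda$. Doubling to $\sO_D^{\oplus 2}$ gives $\dim\ker(R-\lambda I)=2$, and since $\#_{\Pi_2-\lambda\Pi_1}=(R-\lambda I)\circ\#_{\Pi_1}$ with $\#_{\Pi_1}$ invertible, the rank of $\Pi_2-\lambda\Pi_1$ at $D$ is exactly $2n-2$.

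For part (iii), I would appeal to Proposition \ref{log}: since $\chi_R=P^2$, the Pfaffian polynomial is $\mu_R=P$, whose coefficients by part (i) are the components of $\pi^{[n]}$. Hence the submersion hypothesis of Proposition \ref{log} is satisfied precisely on the $\pi^{[n]}$-regular locus, and the claim follows. The only genuinely non-routine step is the first one, namely the ring-theoretic identification $H^0(D,\sN_{D/S})\simeq\sO_D^{\oplus 2}$ together with the diagonal description of $R$; once that is established, everything else reduces to elementary linear algebra over the finite-dimensional Artinian algebra $\cx[z]/(P(z))$.
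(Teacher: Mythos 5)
Your proof is correct, and it reaches the key structural fact about $R$ (minimal polynomial equal to the Pfaffian polynomial, geometric multiplicity exactly $2$ for every eigenvalue) by a genuinely different route from the paper. The paper combines two facts: the geometric multiplicities of a recursion operator are automatically even (since $\ker(R-\lambda)=\#_{\Pi_1}\bigl(\ker\#_{\Pi_2-\lambda\Pi_1}\bigr)$ and the rank of a bivector is even), and the operator $\bar R$ of multiplication by $z$ on $T_{\pi(D)}S^n\cx\simeq\cx[z]/(P)$ is cyclic with characteristic polynomial $\mu_R$, while $\chi_R=\mu_R^2$; "putting this together" then yields the Jordan structure of $R$. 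You instead pin down the full $\sO_D$-module structure of the tangent space, $H^0(D,\sN_{D/S})\simeq\sO_D^{\oplus 2}$, using that a transverse $D$ is a curvilinear l.c.i.\ in the surface and that projective modules over Artinian local rings are free, after which everything is linear algebra on $\bigl(\cx[z]/(P)\bigr)^{\oplus 2}$. What your route buys is that "geometric multiplicity exactly $2$" (hence (ii)) drops out in one stroke, a point the paper's argument leaves somewhat implicit (evenness plus $\chi_R=\mu_R^2$ alone do not force multiplicity $2$ without some intertwining of $R$ with $\bar R$); what it costs is an appeal to the deformation-theoretic identification of $T_DS^{[n]}$ with $H^0(D,\sN_{D/S})$ and the freeness of the normal sheaf, whereas the paper stays within the bi-Poisson formalism. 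Parts (ii) and (iii) are then handled the same way in both arguments: the rank count via $\#_{\Pi_2-\lambda\Pi_1}=(R-\lambda I)\circ\#_{\Pi_1}$, and Proposition \ref{log} restricted to the open set of $\pi^{[n]}$-regular points.
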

\begin{proof} We already know that  $S^{[n]}_\pi$ is a nondegenerate bi-Poisson manifold. Owing to the definition of the recursion operator, we know that the geometric multiplicity of each eigenvalue is even. Now observe that the multiplication by $\pi$ defines also an endomorphism $\bar R$ of $T_{\pi(D)}\cx^{[n]}$. The geometric multiplicity of every eigenvalue of $\bar R$ is equal to $1$ (since $\pi(D)\in S^n\cx$ has length $n$). We also know that the characteristic polynomial  of $\bar R$ is equal to the Pfaffian polynomial $\mu_R(\lambda)$ of $R$, and that the characteristic polynomial of $R$ is $\mu_R(\lambda)^2$. Putting this together, we conclude that the geometric multiplicity of every  eigenvalue of $R$ is equal to $2$ and that the minimal polynomial of $R$ is equal to $\mu_R(\lambda)$. This proves statements (i) and (ii). The third statement follows from Proposition \ref{log}.
\end{proof}
\begin{remark} Statement (i) has been shown in \cite[Remark 2.4]{Nicc1} under the assumption that $\pi$ is a submersion.\end{remark}
\begin{remark} Since, owing to the above mentioned result of Bottacin, any Poisson structure on $S$ induces a Poisson structure on $S^{[n]}$, we can conclude that if $S$ is a Poisson surface with a holomorphic map $\pi:S\to\cx$, then $S^{[n]}$ is a bi-Poisson manifold. The bi-Poisson structure will, however, be degenerate if $S$ is not symplectic.
\end{remark}
\begin{remark} Suppose that $(z,u)$ are Darboux coordinates for the symplectic form $\omega$ on an open subset $U$ of $S$, i.e.\ $\omega=dz\wedge du$ on $U$. Suppose also that $\pi(z,u)=z$ (which implies that $\pi$ is a submersion on $U$). Then the corresponding open subset $U^{[n]}_\pi$ can be described as an open subset of $\{(q(z),p(z))\}$, where $q(z)$ is a monic polynomial of degree $n$ and $p(z)$ is a polynomial of degree at most $n-1$, such that, for every root $z_i$ of $q$, $(z_i,p(z_i))\in U$. On the open dense subset of $U^{[n]}_\pi$, where the roots are distinct, the two Poisson structures are given by:
$$ \Pi_1=\sum_{i=1}^n\frac{\partial}{\partial z_i}\wedge \frac{\partial}{\partial u_i},\quad \Pi_2=\sum_{i=1}^nz_i\frac{\partial}{\partial z_i}\wedge \frac{\partial}{\partial u_i},\enskip \text{where $u_i=p(z_i)$}.$$
\label{local}\end{remark}

We can now characterise transverse Hilbert schemes on symplectic surfaces, in the case when $\pi$ is a submersion (i.e.\ the case originally considered by Atiyah and Hitchin):
\begin{theorem} Let $(M^{2n},\Pi_1,\Pi_2)$ be a holomorphic bi-Poisson manifold with $\Pi_1$ symplectic. Assume that the coefficients of the Pfaffian polynomial of the corresponding recursion operator $R$ define a submersion $p:M\to \oC^{n}$ and that, for each $\lambda\in \cx$, if  the degeneracy locus $D_\lambda$ of $\Pi_2-\lambda\Pi_1$ is nonempty, then its symplectic foliation is simple.
\par
Then there exists a symplectic surface $S$ with a holomorphic submersion $\pi:S\to \cx$ and a local bi-Poisson biholomorphism $\Phi:(M,\Pi_1,\Pi_2)\to S^{[n]}_\pi$.\label{tr-sympl}
\end{theorem}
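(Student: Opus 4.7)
The plan is to reconstruct the surface $S$ intrinsically as the leaf space of a foliation on an incidence hypersurface in $\cx\times M$, and then realize $M$ locally as the transverse Hilbert scheme $S^{[n]}_\pi$ using Darboux--Nijenhuis coordinates adapted to the bi-Poisson structure. The first ingredient is classical bi-Hamiltonian theory: since the Nijenhuis tensor of $R$ vanishes and the coefficients of $\mu_R$ form a Poisson-commuting submersion, around any $m_0\in M$ at which $R(m_0)$ has $n$ distinct eigenvalues there exist holomorphic coordinates $(z_1,\dots,z_n,u_1,\dots,u_n)$ in which the $z_i$ are local branches of the eigenvalues of $R$ and
\begin{equation*}
\Pi_1=\sum_i \partial_{z_i}\wedge\partial_{u_i}, \qquad \Pi_2=\sum_i z_i\,\partial_{z_i}\wedge\partial_{u_i}.
\end{equation*}
These \emph{Darboux--Nijenhuis coordinates} are the bi-Poisson analogue of action-angle coordinates, and they reduce the generic structure of $M$ to the local model of Remark \ref{local}.

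To construct $S$, I invoke Proposition \ref{log}: each $\Pi_2-\lambda\Pi_1$ is log-symplectic, so when nonempty $D_\lambda$ is a smooth hypersurface of $M$ carrying a Poisson structure of rank $2n-2$, and the simple-foliation hypothesis makes the leaf space $S_\lambda=D_\lambda/\mathcal{F}_\lambda$ a smooth 1-dim complex manifold. Consider the incidence hypersurface
\begin{equation*}
Z=\{(\lambda,m)\in\cx\times M:\mu_R(\lambda)(m)=0\}\subset\cx\times M.
\end{equation*}
The submersion hypothesis on the coefficients of $\mu_R$ ensures the differential of the defining equation never vanishes on $Z$, so $Z$ is smooth of complex dimension $2n$. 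The fibrewise symplectic foliations $\mathcal{F}_\lambda$ assemble into a codimension-two holomorphic foliation $\mathcal{F}$ on $Z$, and I set $S:=Z/\mathcal{F}$, with $\pi:S\to\cx$ induced by the first projection. In Darboux--Nijenhuis coordinates $Z$ is locally the union of the hyperplanes $\{\lambda=z_i\}$ and the leaves inside each are $\{u_i=\mathrm{const}\}$, so $S$ is locally $\cx^2$ with coordinates $(\lambda,u_i)$; hence $S$ is a smooth 2-dim complex manifold and $\pi$ a submersion.

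For the symplectic form on $S$ I use the normal form \eqref{normal} for the log-symplectic Poisson structure dual to $\Pi_2-\lambda\Pi_1$: the 1-form $\alpha_\lambda|_{D_\lambda}$ is closed, intrinsically defined, and has kernel equal to the leaves of $\mathcal{F}_\lambda$, so it descends to a holomorphic 1-form on the fibre $S_\lambda=\pi^{-1}(\lambda)$. Assembled as $\lambda$ varies these give a 1-form $\alpha$ on $S$, and I define $\omega_S:=d\lambda\wedge\alpha$, which is independent of the ambiguity of $\alpha$ in the $d\lambda$-direction; in Darboux--Nijenhuis coordinates this reads $\omega_S=dz\wedge du$, so $\omega_S$ is a holomorphic symplectic form. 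The map $\Phi$ is defined by sending $m\in M$ to the scheme-theoretic image of the length-$n$ subscheme $Z_m=\{\lambda:\mu_R(\lambda)(m)=0\}$ of $\cx$ under the quotient $Z\to S$; this is a length-$n$ 0-dim subscheme of $S$ on which $\pi$ restricts to an isomorphism, yielding $\Phi(m)\in S^{[n]}_\pi$. On the open dense locus where $\mu_R(\cdot)(m)$ has simple roots, Darboux--Nijenhuis coordinates identify $\Phi$ with the map $(z_i,u_i)\mapsto\sum_i(z_i,u_i)$ of Remark \ref{local}, which is a biholomorphism carrying $(\Pi_1,\Pi_2)$ to the two Bottacin--Beauville Poisson structures of \S\ref{symplectic} by Proposition \ref{que}(i); the identification then extends to all of $M$ by analytic continuation.

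The main technical difficulty will lie in the last step at points where $\mu_R(\cdot)(m)$ has repeated roots: there Darboux--Nijenhuis coordinates are unavailable and $\Phi(m)$ is a nonreduced 0-dim subscheme of $S$. Proving that $\Phi$ is holomorphic and a local biholomorphism across this non-generic locus requires a careful description of the jet structure of the quotient map $Z\to S$ near points where several sheets of $Z$ come together; I expect the compatible Poisson structures $\Pi_\rho$ of Proposition \ref{polPoi}, applied to $\rho(\lambda)=(\lambda-\lambda_0)^k$, to provide exactly the $(k-1)$-jet of $\Phi(m)$ at each repeated root $\lambda_0$ of multiplicity $k$, and thus to patch the construction across the ramification locus of the eigenvalue map.
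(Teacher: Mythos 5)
Your construction of the incidence hypersurface, the leaf space $S$, the symplectic form $d\lambda\wedge\alpha$, and the map $\Phi$ is the same as the paper's, and is fine. The genuine gap is exactly where you flag it: the claim that $\Phi$ is a local biholomorphism at points where $\mu_R(\cdot)(m)$ has repeated roots. ``Extends by analytic continuation'' does not work here as a standalone argument: a holomorphic map can be a biholomorphism on a dense open subset and still fail to be a local biholomorphism on the complement (e.g.\ $(x,y)\mapsto (x,xy)$ on $\cx^2$), so knowing $\Phi$ is invertible on the locus of distinct eigenvalues gives nothing at the ramification locus. Your proposed fix via $\Pi_\rho$ with $\rho(\lambda)=(\lambda-\lambda_0)^k$ is only a conjecture (``I expect\dots''), so the proof is incomplete precisely at the one step that is not routine.

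The paper closes this gap by reversing the logic: instead of trying to extend $\Phi^{-1}$ from the generic locus, it constructs a candidate inverse $\psi$ that is defined \emph{everywhere} from the start. The point is that the hypothesis is a submersion condition on the \emph{coefficients} of $\mu_R$ (not on the eigenvalues), so the Hamiltonian vector fields of these coefficients with respect to $\Pi_1$ are linearly independent and commuting on all of a neighbourhood $U$, including the ramification locus; together with the coefficients themselves they furnish ``action--angle'' coordinates on $U$ (coefficients plus the locally free $\cx^n$-action they generate). The same action--angle coordinates exist on $(S_p)^{[n]}_\pi$, and matching them yields a holomorphic map $\psi:(S_p)^{[n]}_\pi\to U$ with no genericity restriction. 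The Darboux--Nijenhuis coordinates of Fernandes are then used only to verify $\psi\circ\Phi=\mathrm{id}$ and the bi-Poisson property on the dense subset of distinct eigenvalues, after which density and continuity give $\psi=\Phi^{-1}$ on all of $U$. If you want to salvage your write-up, replace the analytic-continuation step and the $\Pi_\rho$ speculation with this construction of a globally defined $\psi$.
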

\begin{proof} Let $\mu_m(\lambda)$ denote the Pfaffian polynomial of $R_{m}$. We consider the following incidence variety (cf.\ \cite[pp.\ 40-43]{AH},\cite{Nicc2}):  $$T=\{(\lambda,m)\in \cx\times M; \, m\in D_\lambda\}=\{(\lambda,m)\in \cx\times M; \, \mu_m(\lambda)=0\}.$$
Due to the assumptions and  to Proposition \ref{log}, $T$ is smooth and the symplectic foliation on each $D_\lambda$ is simple with codimension one leaves. We thus obtain an integrable simple foliation $\sF$ of $T$, the leaf space of which is a $2$-dimensional complex manifold $S$ with a canonical holomorphic submersion $\pi:S\to \cx$. 
\par
On each $D_\lambda$ there is a canonically defined (closed) $1$-form $\alpha_\lambda$ (cf.\ \eqref{normal} and the following lines), the kernel of which corresponds to the symplectic foliation of $D_\lambda$. Thus $\alpha_\lambda\wedge d\lambda$ defines a nondegenerate, hence symplectic,  $2$-form on $S$.

 The scheme-theoretic inverse image of a point $m\in M$ defines a $0$-dimensional subspace $Z_m$ of $T$ with structure sheaf isomorphic to $\cx[\lambda]/(\mu_m(\lambda))$. The projection $T\to \cx$ maps $Z_m$ isomorphically onto a $0$-dimensional subspace of $\cx$ of length $n$. Thus $Z_m$ descends to an  element of $S^{[n]}_\pi$, and we obtain a holomorphic map $\Phi:M\to S^{[n]}_\pi$. It remains to show that $\Phi$ is a local diffeomorphism. Since the coefficients of $\mu_m$ define a submersion, the corresponding Hamiltonian vector fields do not vanish anywhere. Near any point $p\in M$ we have therefore the ``action-angle" coordinates on a neighbourhood $U$ (given by coefficients of $\mu$ and the local free action of $\cx^n$). Let $S_p$ be the symplectic surface obtained from $U$ by the above procedure. On $(S_p)^{[n]}_\pi$ there are analogous ``action-angle" coordinate and therefore we obtain a holomorphic map
 $\psi:(S_p)^{[n]}_\pi\to U$. Fernandes \cite{Rui} shows that, on the subset where the eigenvalues are distinct, there exist local coordinates $z_i,u_i$ such that
$$ \Pi_1=\sum_{i=1}^n \frac{\partial}{dz_i}\wedge \frac{\partial}{du_i},\quad \Pi_2=\sum_{i=1}^n z_i\frac{\partial}{dz_i}\wedge \frac{\partial}{du_i}.$$ Thus, owing to Remark \ref{local}, $\psi$ is the inverse of $\Phi$ on the open dense subset where the roots of $\mu_m$ are distinct (and a bi-Poisson isomorphism) and,  hence, $\psi$ is the inverse of $\Phi_{|U}$.                                                                                                                                                                                            
\end{proof}
\begin{remark} Presumably the result remains true without the assumption that the symplectic foliations of $D_\lambda$ are simple, provided  we replace ``symplectic surface" with ``$2$-dimensional symplectic stack".\end{remark}

\section{Hyperk\"ahler geometry of transverse Hilbert schemes}
\subsection{Ward transform}
Let us briefly recall the essential features of the Ward transform \cite{Ward,Sal} in the case of hypercomplex manifolds. Let $Z$ be complex manifold with a surjective holomorphic submersion $\pi:Z\to \oP^1$, and let $M^\cx$ be the Kodaira moduli space of sections with normal bundle isomorphic to $\sO(1)^{\oplus n}$. The twistor double fibration  in this case is simply
\begin{equation} M^\cx\stackrel{\tau}{\longleftarrow} M^\cx\times \oP^1\stackrel{\nu}{\longrightarrow}Z.\label{double}
\end{equation}
If $F$ is an $M^\cx$-uniform (i.e.\ $h^0(\nu(\tau^{-1}(m)))$ is constant on $M^\cx$) holomorphic vector bundle on $Z$, then we obtain an induced holomorphic vector bundle $\hat F=\tau_\ast\nu^\ast F$ on $M^\cx$. In particular, if we denote by $E$ the vector bundle induced from $T_\pi Z\otimes \pi^\ast\sO(-1)$ (where  $T_\pi Z=\ker d\pi$) and by $H$ the trivial vector bundle with fibre $\cx^2$, we have $TM\simeq E\otimes H$. Furthermore, the vector bundle induced from $\pi^\ast \sO(k)$, $k\geq 0$, is simply $S^k H$, and if $F$ is $M^\cx$-trivial (i.e.\ trivial on each line $ \nu(\tau^{-1}(m))$), then the bundle induced from $F\otimes \pi^\ast \sO(k)$ is $\hat F\otimes S^k H$, for any $k\geq 0$.
\par
Recall also that an induced vector bundle comes equipped with a first-order differential operator, which arises as the pushforward of a partial connection on $\nu^\ast F$, which is basically the exterior derivative in the fibre directions. If $F$ is $M^\cx$-trivial, then this operator is a linear connection on $\hat F$. We can identify this operator for bundles of the form $\hat F\otimes S^k H$, which are induced from $F\otimes \pi^\ast \sO(k)$, where $F$ is $M^\cx$-trivial. We fix an isomorphism $H\simeq H^\ast$ (which corresponds to a choice of isomorphism $H^1(\oP^1,\sO(-2))\simeq \cx$). We denote by $\alpha$ the natural projection $S^k H\otimes H\to S^{k+1} H$ (which corresponds to multiplication of sections of $\sO(k)$ and of $\sO(1))$, and use the same letter for the corresponding map on $W\otimes S^k H\otimes H\to W\otimes S^{k+1} H$ for any holomorphic vector bundle $W$. The induced differential operators ${\rm D}$ are then:
\begin{itemize}
\item[1.] on $S^k H$, ${\rm D}=\alpha\circ d$;
\item[2.] on $\hat F\otimes S^k H$,
    $ {\rm D}=\alpha\circ (\nabla\otimes {\rm Id}) + {\rm Id}\otimes (\alpha\circ d)$, where $F$ is $M^\cx$-trivial and $\nabla$ denotes the induced connection on $\hat F$. The principal symbol of ${\rm D}$ is $\alpha$.
    \end{itemize}
The construction of $D$ as the push-forward of a partial connection shows, in particular, that ${\rm D}s=0$ if and only if $s=\tau_\ast\eta^\ast \tilde s$ for a holomorphic section $\tilde s$ of $F(k)$ on $Z$.

\subsection{Hyper-Poisson bivectors\label{HP}}
Let $M$ be a hypercomplex manifold with twistor space $Z$. As discussed above, the vector bundle $T_\pi Z\otimes \pi^\ast\sO(-1)$ is $M$-trivial, and hence the operator ${\rm D}$ on the induced vector bundle $E$ is a linear connection $\nabla$. Recall that the tensor product of $\nabla$ and the flat connection on $H$ is a torsion-free linear connection on $TM$ known as the {\em Obata connection}. The induced operator ${\rm D}$ on $TM$ is therefore the composition of the Obata connection and the projection $H\otimes H\to S^2H$. Similarly, the vector bundle $\Lambda^r (T_\pi Z\otimes \pi^\ast\sO(-1))$ is $M$-trivial, and hence the vector bundle on $M$ induced from $\Lambda^r T_\pi Z$ is $\Lambda^r E\otimes S^r H$. This is a direct summand of $\Lambda^r T^\cx M$, which Salamon \cite[Prop.\ 4.2]{Sal} identifies with the subspace of finite linear combinations of multivectors of type $(r,0)$ for different complex structures.
Salamon also shows that its $Sp(1)$-invariant complement is
\begin{equation} B^r=\bigcap_{\zeta}\bigl( T^{1,r-1}_\zeta M\oplus T^{2,r-2}_\zeta M\oplus \dots\oplus T^{r-1,1}_\zeta M\bigr),\label{Br}\end{equation}
where $\zeta\in \oP^1$ labels different complex structures of the hypercomplex structure.
As explained in the previous subsection,  a multivector field $\Pi\in \Gamma(\Lambda^r E\otimes S^r H)$ arises from a holomorphic section of $\Lambda^r T_\pi Z$
precisely if it satisfies the equation ${\rm D}\Pi=0$. In that case, for any complex structure $I_\zeta$, the $(r,0)$-component of $\Pi$ is the corresponding holomorphic multivector field on the fibre $\pi^{-1}(\zeta)$ of $Z$.
\begin{definition} Let $M$ be a hypercomplex manifold. A bivector  $\Pi\in  \Gamma(\Lambda^2 T M)$ is called a {\em hyper-Poisson bivector} if
\begin{itemize}
\item[(i)] $\Pi\in \Gamma(\Lambda^2 E\otimes S^2 H)$;
\item[(ii)] for each complex structure $I_\zeta$, the corresponding $(2,0)$-component $\Pi_\zeta^{2,0}$ of $\Pi$ is a holomorphic Poisson bivector on $(M,I_\zeta)$.
\end{itemize}\label{hyperP}
\end{definition}
\begin{remark} As explained above, the condition that $\Pi_\zeta^{2,0}$ is holomorphic for each $\zeta$ is equivalent to ${\rm D}\Pi=0$. On the other hand, condition (ii) implies that $[\Pi,\Pi]\in \Gamma(B^3)$, where $[\;,\;]$ denotes the Schouten-Nijenhuis bracket and  $B^3$ is defined in \eqref{Br}.
\end{remark}
As usual, given a bivector field on $M$, we can define define a bracket of (real- or complex-valued) functions on $M$ by
\begin{equation}\{f,g\}=\Pi(df,dg).\label{bracket}\end{equation}
The name ``hyper-Poisson" is justified by the following observation, which follows directly from the definition.
\begin{proposition} A bivector $\Pi\in \Gamma(\Lambda^2 E\otimes S^2 H)$ is hyper-Poisson if and only if,  for each complex structure $I_\zeta$, the bracket \eqref{bracket} is a Poisson bracket on the sheaf $\sO(M,I_\zeta)$ of $I_\zeta$-holomorphic functions.\hfill $\Box$
\end{proposition}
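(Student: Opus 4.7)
The plan is to reduce everything to a pointwise/local computation in $I_\zeta$-holomorphic coordinates, using the single key observation that, for an $I_\zeta$-holomorphic function $f$, the differential $df$ is of type $(1,0)$ with respect to $I_\zeta$. Consequently, for any pair of $I_\zeta$-holomorphic functions $f,g$, only the $(2,0)$-component of $\Pi$ contributes to the bracket:
\begin{equation*}
\{f,g\}=\Pi(df,dg)=\Pi^{2,0}_\zeta(df,dg).
\end{equation*}
Once this is in hand, the equivalence becomes a statement about the holomorphic bivector $\Pi^{2,0}_\zeta$ alone, and both implications are standard.

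For the forward direction, if $\Pi$ is hyper-Poisson then by Definition \ref{hyperP}(ii) the bivector $\Pi^{2,0}_\zeta$ is holomorphic and Poisson on $(M,I_\zeta)$; the displayed identity above then shows that the bracket restricted to $\sO(M,I_\zeta)$ is precisely the Poisson bracket determined by $\Pi^{2,0}_\zeta$, hence bilinear, skew, Leibniz, and Jacobi.

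For the reverse direction, I would work in a local $I_\zeta$-holomorphic chart $z_1,\dots,z_{2n}$. The hypothesis that $\{\,\cdot\,,\,\cdot\,\}$ is a bracket on the sheaf $\sO(M,I_\zeta)$ requires first that $\{z_i,z_j\}$ again be $I_\zeta$-holomorphic; since $\{z_i,z_j\}$ equals the $(ij)$-component of $\Pi^{2,0}_\zeta$ in these coordinates, this is exactly the condition that the local components of $\Pi^{2,0}_\zeta$ be holomorphic, i.e.\ that $\Pi^{2,0}_\zeta$ be a holomorphic bivector field on $(M,I_\zeta)$. Equivalently (as noted in the remark following Definition \ref{hyperP}), this says ${\rm D}\Pi=0$. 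The Jacobi identity applied to the coordinate functions $z_i,z_j,z_k$ then gives
\begin{equation*}
[\Pi^{2,0}_\zeta,\Pi^{2,0}_\zeta](dz_i,dz_j,dz_k)=2\bigl(\{\{z_i,z_j\},z_k\}+\{\{z_j,z_k\},z_i\}+\{\{z_k,z_i\},z_j\}\bigr)=0,
\end{equation*}
and since the $(1,0)$-forms $dz_1,\dots,dz_{2n}$ span $T^{\ast 1,0}_\zeta M$ at every point of the chart, the holomorphic trivector $[\Pi^{2,0}_\zeta,\Pi^{2,0}_\zeta]$ vanishes identically. Hence $\Pi^{2,0}_\zeta$ is a holomorphic Poisson bivector on $(M,I_\zeta)$.

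There is no real obstacle here; the only point that deserves care is making sure ``Poisson bracket on the sheaf $\sO(M,I_\zeta)$'' is interpreted as including the closure condition $\{f,g\}\in\sO(M,I_\zeta)$ for $f,g\in\sO(M,I_\zeta)$, since that is precisely what forces $\Pi^{2,0}_\zeta$ to be holomorphic (rather than merely smooth with a holomorphic action on the generators), and this closure is what allows the Jacobi computation on coordinates to be promoted to the tensorial identity $[\Pi^{2,0}_\zeta,\Pi^{2,0}_\zeta]=0$.
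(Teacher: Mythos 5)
Your argument is correct and is exactly the routine verification the paper omits (the proposition is stated with an empty proof as ``following directly from the definition''): the key point that only $\Pi^{2,0}_\zeta$ contributes when both arguments are $(1,0)$-forms, together with closure of the bracket forcing holomorphy of the components and the Jacobi identity on coordinates forcing $[\Pi^{2,0}_\zeta,\Pi^{2,0}_\zeta]=0$, is precisely what is intended. No gaps.
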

\begin{remark} Our notion of ``hyper-Poisson" is different from \cite{HS}. There, it means a triple $(\pi_1,\pi_2,\pi_3)$ of bivectors, such that $\pi_2-i\pi_3$ is an $I_1$-holomorphic Poisson bivector etc. We do not think there is a danger of confusion, since we talk about {\em hyper-Poisson bivectors}, while \cite{HS} deals with {\em hyper-Poisson triples}.
\end{remark}
\begin{definition} Let $M$ be a hyperk\"ahler manifold. A hyper-Poisson bivector  $\Pi$ on $M$ is said to be compatible with the hyperk\"ahler structure if, for every complex structure, the holomorphic Poisson bivector $\Pi_\zeta^{2,0}$ is compatible with $\Omega_\zeta^{-1}$, where $\Omega_\zeta$ is the corresponding parallel holomorphic symplectic form.
\end{definition}
\begin{example} Recall that the twistor space $Z$ of a hyperk\"ahler manifold is equipped with a fibrewise $\sO(2)$-valued complex symplectic form $\omega$, i.e. a section of $\Lambda^2T^\ast_\pi Z\otimes \sO(2)$. It can be viewed as a (holomorphic) section of $\Lambda^2 (T_\pi Z(-1))^\ast$, i.e.\ it induces a symplectic form on the bundle $E$. The fibrewise bivector $\omega^{-1}$ is a section of $\Lambda^2 (T_\pi Z(-1))$ and so multiplying it by a real section of $\pi^\ast\sO(2)$ yields a holomorphic section of $\Lambda^2 T_\pi Z$ compatible with the real structure, i.e.\ a hyper-Poisson bivector. This bivector is simply a constant multiple of $\omega_\alpha^{-1}$, where $\omega_\alpha$ is one of  the  K\"ahler forms of the hyperk\"ahler metric ($\alpha$ is determined by the chosen section of $\sO(2)$). In other words, for  any K\"ahler form $\omega_\alpha$ of the hyperk\"ahler metric, the dual bivector $\omega_\alpha^{-1}$ is a hyper-Poisson bivector compatible with the hyperk\"ahler structure. The corresponding Poisson bracket on $\sO(M,I_\zeta)$ is identically $0$ when $I_\zeta=\pm I_\alpha$.
\end{example}

We can classify hyper-Poisson bivectors on $4$-dimensional hyperk\"ahler manifolds.
\begin{theorem} Let $(M,g,I_1,I_2,I_3)$ be a $4$-dimensional hyperk\"ahler manifold with corresponding K\"ahler forms $\omega_1,\omega_2,\omega_3$. A bivector $\Pi$ on $M$ is hyper-Poisson if and only if 
\begin{equation}\Pi=f_1\omega_1^{-1}+f_2\omega_2^{-1}+f_3\omega_3^{-1},\label{Pi}\end{equation}
for smooth functions $f_1,f_2,f_3:M\to \oR$ satisfying $I_1df_1=I_2df_2=I_3df_3$. Such a bivector is compatible with the hyperk\"ahler structure.
\label{dim=4}\end{theorem}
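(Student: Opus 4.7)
The plan is to reduce the theorem to two dimensional accidents of the $4$-dimensional hyperk\"ahler case and the classical twistorial description of hyperk\"ahler moment maps.

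The first accident, which handles the form \eqref{Pi}, is that $\dim_\oR M=4$ forces $\Lambda^2 E\otimes S^2 H$ to have complex rank $3$. By the example preceding the theorem, the bivectors $\omega_1^{-1},\omega_2^{-1},\omega_3^{-1}$ all lie in its real form, and they are pointwise linearly independent (they correspond under the Ward transform to three independent real sections of $\pi^\ast\sO(2)$ on $\oP^1$). They therefore give a global real frame of the rank-$3$ real subbundle, so any $\Pi$ satisfying condition~(i) of Definition~\ref{hyperP} is of the form $\Pi=f_1\omega_1^{-1}+f_2\omega_2^{-1}+f_3\omega_3^{-1}$ for uniquely determined real functions $f_\alpha$.

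The second accident is that $(M,I_\zeta)$ has complex dimension $2$, so $\Lambda^3$ of its holomorphic tangent bundle vanishes and every Schouten--Nijenhuis bracket of bivectors on $(M,I_\zeta)$ is automatically zero. Hence $[\Pi^{2,0}_\zeta,\Pi^{2,0}_\zeta]=0$ and $[\Pi^{2,0}_\zeta,\Omega_\zeta^{-1}]=0$ hold for dimensional reasons alone: condition~(ii) reduces to the $I_\zeta$-holomorphicity of $\Pi^{2,0}_\zeta$ for every $\zeta$, and the compatibility with the hyperk\"ahler structure asserted at the end of the theorem is automatic.

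To translate this holomorphicity into $I_1 df_1=I_2 df_2=I_3 df_3$ I would pass to the twistor space $Z$. In the $4$-dimensional setting the fibrewise $\sO(2)$-valued symplectic form $\omega$ trivializes $\Lambda^2 T_\pi Z$ as $\pi^\ast\sO(2)$, so by the Ward transform hyper-Poisson bivectors correspond to real holomorphic sections $\mu\in H^0(Z,\pi^\ast\sO(2))$ via $\Pi\leftrightarrow\mu\cdot\omega^{-1}$. A real section written in an affine trivialization on $\oP^1$ is a polynomial $\mu(m,\zeta)=(f_2+if_3)(m)-2\zeta f_1(m)-\zeta^2(f_2-if_3)(m)$, and extending the example preceding the theorem from constant to polynomial coefficients shows that the induced bivector on $M$ is exactly $\sum f_\alpha\omega_\alpha^{-1}$.

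The core step is then that $\mu$ is holomorphic on $Z$ if and only if $d_M\mu(\cdot,\zeta)$ is of type $(1,0)_\zeta$ for every $\zeta$, and this in turn happens precisely when there exists a vector field $X$ on $M$ with $\iota_X\omega_\alpha=df_\alpha$ for $\alpha=1,2,3$. One direction is immediate: $d_M\mu(\cdot,\zeta)=\iota_X\,\omega|_\zeta$ is automatically $(1,0)_\zeta$ since $\omega|_\zeta$ is $(2,0)_\zeta$; for the converse one splits the $(1,0)_\zeta$-equation into its four $\zeta$-power coefficients and reads off $X$ from the $f_\alpha$. Since $\iota_X\omega_\alpha$ is the musical dual of $I_\alpha X$ and $I_\alpha$ is antisymmetric with respect to the metric, one obtains $I_\alpha df_\alpha=X^\flat$ for all $\alpha$, which is the asserted condition. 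I expect the main obstacle to be the careful bookkeeping of sign and orientation conventions, both in the explicit formula for $\omega|_\zeta$ and in the identifications $(\omega_\alpha^{-1})^{2,0}_\zeta\propto\Omega_\zeta^{-1}$; I would pin these down at $\zeta=0$ by direct calculation and extend by the $SO(3)$-symmetry of the hyperk\"ahler structure.
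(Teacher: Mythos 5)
Your proposal is correct, and its skeleton coincides with the paper's: the pointwise observation that $\Lambda^2E\otimes S^2H$ has rank $3$ and is spanned by $\omega_1^{-1},\omega_2^{-1},\omega_3^{-1}$ forces the form \eqref{Pi}, and the fact that $\dim_\cx(M,I_\zeta)=2$ kills all trivectors, so the Jacobi identity and the compatibility with $\Omega_\zeta^{-1}$ are automatic. Where you diverge is the translation of condition (ii) into $I_1df_1=I_2df_2=I_3df_3$. The paper stays on $M$ and argues directly with the three distinguished complex structures: $\Pi^{2,0}_{I_1}$ is a multiple of $(f_2+if_3)(\omega_2+i\omega_3)^{-1}$, so hyper-Poisson forces $f_2+if_3$ to be $I_1$-holomorphic (and cyclically), and this triple of Cauchy--Riemann conditions is then observed to be equivalent to the stated condition; the converse for \emph{all} $\zeta$ is asserted rather tersely. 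You instead pass to the twistor space, trivialise $\Lambda^2T_\pi Z$ by $\omega^{-1}$, and invoke the standard correspondence between fibrewise-holomorphic real sections of $\pi^\ast\sO(2)$ and triples of moment maps for a common tri-Hamiltonian vector field. Your route requires slightly more machinery but buys a cleaner treatment of the converse direction (holomorphicity for every $\zeta$ at once) and makes the Corollary following the theorem transparent. One detail to watch: ``splitting the $(1,0)_\zeta$-condition into $\zeta$-power coefficients'' is not literally a polynomial identity, since $I_\zeta$ involves $\bar\zeta$; you should first rewrite the condition as $d_M\mu(\cdot,\zeta)\wedge\Omega(\zeta)=0$ (legitimate in complex dimension $2$), which \emph{is} polynomial in $\zeta$, before extracting coefficients. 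This is consistent with the bookkeeping caveat you already flag and does not affect the validity of the argument.
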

\begin{proof} If dim $M=4$, then $\Lambda^2 E \otimes S^2 H$ is spanned at each point by $\omega_1^{-1},\omega_2^{-1},\omega_3^{-1}$. Therefore $\Pi$ must be of the form \eqref{Pi}.
Its $(2,0)$-component with respect to  $I_1$ is equal to 
$$\frac{1}{2}(f_2+if_3)(\omega^{-1}_2-i\omega_3^{-1})=2(f_2+if_3)(\omega_2+i\omega_3)^{-1},$$
Hence, if $\Pi$ is hyper-Poisson, then $f_2+if_3$ is $I_1$-holomorphic (since $ \omega_2+i\omega_3$ is $I_1$-holomorphic). Similarly, $f_3+if_1$ must be $I_2$-holomorphic, and $f_1+if_2$ must be $I_3$-holomorphic. This triple of conditions is equivalent to $I_1df_1=I_2df_2=I_3df_3$. Conversely, if the latter condition holds, then, for any complex structure, the $(2,0)$-part of $\Pi$ is holomorphic. The $(2,0)$-part is also Poisson, since $\dim_\cx M=2$. For the same reason $\Pi$ is compatible with the hyperk\"ahler structure. 
\end{proof}
\begin{remark} This result can be, of course, also proved by describing holomorphic sections of the line bundle $\Lambda^2T_\pi Z$.\end{remark}
\begin{corollary} A $4$-dimensional hyperk\"ahler manifold $M$ admits a hyper-Poisson bivector, other than a constant linear combination of $\omega_1^{-1},\omega_2^{-1},\omega_3^{-1}$, if and only if $M$ admits a non-zero tri-Hamiltonian vector field. In this case the functions $f_1,f_2,f_3$ are the three moment maps for this vector field.\hfill$\Box$\end{corollary}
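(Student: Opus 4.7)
The plan is to read off the corollary directly from Theorem \ref{dim=4}, which already reduces every hyper-Poisson bivector on a $4$-dimensional hyperk\"ahler $M$ to the form $\Pi=f_1\omega_1^{-1}+f_2\omega_2^{-1}+f_3\omega_3^{-1}$ subject to the constraint $I_1df_1=I_2df_2=I_3df_3$. The real content of the corollary is to recognise that this triple equality is exactly the statement that a single vector field $X$ satisfies $\iota_X\omega_i=df_i$ for $i=1,2,3$, i.e.\ that $X$ is tri-Hamiltonian with moment maps $f_1,f_2,f_3$, and to verify that $X\not\equiv 0$ corresponds precisely to non-constant triples.

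The bridge between the two formulations is the hyperk\"ahler identity $\omega_i(X,Y)=g(I_iX,Y)$. Write $\flat:TM\to T^\ast M$ for the metric isomorphism and let $I_i$ act on $1$-forms via the metric. A direct computation then gives $\iota_X\omega_i=(I_iX)^\flat$, so the equation $\iota_X\omega_i=df_i$ is equivalent to $I_i\grad f_i=-X$, and rewriting this on the form side yields $I_idf_i=X^\flat$. Hence, given a tri-Hamiltonian $X$ with moment maps $f_i$, the three $1$-forms $I_1df_1,I_2df_2,I_3df_3$ share the common value $X^\flat$, so Theorem \ref{dim=4} produces a hyper-Poisson bivector $\Pi$. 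Conversely, given a triple $(f_1,f_2,f_3)$ satisfying the constraint, set $X=(I_1df_1)^\sharp$; the same computation run backwards shows $\iota_X\omega_i=df_i$ for each $i$, identifying $X$ as tri-Hamiltonian with moment maps $f_i$.

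Finally, $X$ vanishes identically iff each $df_i$ vanishes, i.e.\ iff $f_1,f_2,f_3$ are (locally) constant, which is exactly the case in which $\Pi$ is a constant linear combination of $\omega_1^{-1},\omega_2^{-1},\omega_3^{-1}$. Thus the non-trivial hyper-Poisson bivectors are in bijection with non-zero tri-Hamiltonian vector fields, and the $f_i$ appearing in $\Pi$ are by construction the moment maps. I do not anticipate any serious obstacle: the argument is a short chase through the hyperk\"ahler musical identifications, and the only care needed is in pinning down sign and duality conventions so that $I_i$ acting on $1$-forms is consistent with the $I_idf_i$ used in the statement of Theorem \ref{dim=4}.
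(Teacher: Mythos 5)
Your argument is correct and is exactly the route the paper intends: the corollary is stated with no proof (marked $\Box$) precisely because it is the observation that, via $\omega_i(X,\cdot)=g(I_iX,\cdot)$, the constraint $I_1df_1=I_2df_2=I_3df_3$ from Theorem \ref{dim=4} is equivalent to the existence of a single vector field $X$ with $\iota_X\omega_i=df_i$ for all $i$. Your filling-in of the musical-isomorphism chase and the trivial case ($X\equiv 0$ iff the $f_i$ are constant iff $\Pi$ is a constant combination of the $\omega_i^{-1}$) is accurate.
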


Let $M^{4d}$ be a hyperk\"ahler manifold equipped with a compatible hyper-Poisson bivector $\Pi$. For each complex structure $I_\zeta$, $\zeta\in \oP^1$, we have the compatible holomorphic Poisson bivectors $\Pi_1=\Omega_\zeta^{-1}$ and $\Pi_2=\Pi_\zeta^{2,0}$. The Pfaffian polynomials of the corresponding recursion operators combine to define a polynomial $p(\zeta,\lambda)$ of the form
\begin{equation} p(\zeta,\eta)=\lambda^d+p_1(\zeta)\lambda^{d-1}+\dots+p_d(\zeta),\label{pol}\end{equation}
where the degree of $p_i$ is $2i$. Each $p_i$ defines a section of $\sO(2i)$ on $\oP^1$, and \eqref{pol} can be viewed as a holomorphic map 
$$Z\longrightarrow \bigoplus_{i=1}^{d}\bigl|\sO(2i)\bigr|,$$
where $\bigl|\sO(2i)\bigr|$ denotes the total space of $\sO(2i)$. This map is compatible with the real structures. 
In particular $p_1(\zeta)$ is a quadratic polynomial, compatible with the real structure of $|\sO(2)|$, and hence $p_1$ is the hyperk\"ahler moment map for a tri-Hamiltonian vector field $X_\Pi$. We shall call $X_\Pi$ the {\em canonical Killing vector field}.

\subsection{Hyperk\"ahler transverse Hilbert schemes\label{Hts}}
Let $M$ be a $4$-dimensional hyperk\"ahler manifold with a non-trivial tri-Hamiltonian Killing vector field. The moment map induces a holomorphic map $\mu$ from the twistor space $Z$ of $M$ to $|\sO(2)|$. Following Atiyah and Hitchin \cite{AH} we can perform the transverse Hilbert scheme construction on fibres of $Z\to \oP^1$ and obtain a new twistor space $Z_\mu^{[d]}$. Sections of   $Z_\mu^{[d]}\to \oP^1$ are in $1-1$-correspondence with  $1$-dimensional compact complex subspaces $C$ of $Z$ such that:
\begin{itemize}
\item[(i)] the projection $\pi:C\to \oP^1$ is flat with fibres of length $d$;
\item[(ii)] the projection $\mu$ induces a scheme-theoretic isomorphism between $C$ and and its image in $|\sO(2)|$.
\end{itemize}
Observe, that given (ii), (i) simply means that $\mu(C)$ is defined by $p(\zeta,\lambda)=0$, where $p$ is as in \eqref{pol}.
\par
Furthermore, as explained in \cite{Sigma}, the normal bundle of the section of $Z_\mu^{[d]}$ corresponding to $C$ splits as $\sO(1)^{\oplus 2d}$ if and only if the normal sheaf $\sN_{C/Z}$ of the curve $C$ in $Z$ satisfies $H^\ast(C,\sN_{C/Z}(-2))=0$. On the Kodaira moduli space of such sections, satisfying in addition a reality condition, we obtain again a (pseudo)-hyperk\"ahler metric. We shall denote this hyperk\"ahler manifold by $M_\mu^{[d]}$ and refer to it as a {\em hyperk\"ahler transverse Hilbert scheme}.

Consider now the $\sO(2)$-valued complex symplectic form $\omega$ on the fibres of $Z$, which can be viewed  as a (holomorphic) section of $\Lambda^2 (T_\pi Z(-1))^\ast$. Performing the construction of section \ref{symplectic} fibrewise on $Z$ yields a fibrewise Poisson structure $\Pi_2$ on $Z_\mu^{[d]}$, which is a section of $\Lambda^2 (T_\pi Z_\mu^{[d]}(-1))\otimes \sO(2)\simeq \Lambda^2 T_\pi Z_\mu^{[d]}$, i.e.\ it induces a bivector $\hat\Pi_2\in \Lambda^2 E\otimes S^2 H$ on $M^{[d]}_\mu$. We shall see shortly that it is a purely imaginary bivector and, hence, $\Pi=-i\hat \Pi_2$ is real and, as discussed above, a hyper-Poisson bivector. Moreover, for each $\zeta\in \oP^1$, its $(2,0)$-component $\Pi_\zeta^{2,0}$ is compatible with $\Omega_\zeta^{-1}$, since $\Pi_2$ restricted to the fibre $Z_\zeta$ is compatible with $\omega_\zeta^{-1}$. Thus $\Pi$ is a hyper-Poisson bivector compatible 
with the hyperk\"ahler structure on $M_\mu^{[d]}$.
\begin{example} Consider the case $d=1$. The $(2,0)$-part of $\hat\Pi_2$ is equal to $\mu\Omega^{-1}$, where $\Omega$ is the corresponding parallel holomorphic $2$-form and $\mu$ is the corresponding holomorphic moment map. Comparing with Theorem \ref{dim=4}, we conclude that
\begin{equation*}2\hat\Pi_2=\mu_1\omega_1^{-1}+\mu_2\omega_2^{-1}+\mu_3\omega_3^{-1},\label{Pi2}\end{equation*}
where $\omega_1,\omega_2,\omega_3$ denote the K\"ahler forms for complex structures $I_1,I_2,I_3$ and $\mu_1,\mu_2,\mu_3$ are the corresponding  moment maps.
In particular, $\hat\Pi_2$ is purely imaginary. Since the real structure on $Z_\mu^{[d]}$ is induced from the one on $Z$, it follows that  $\hat\Pi_2$  on $M_\mu^{[d]}$ is also purely imaginary.                                                                    
\label{d=1}\end{example}
\begin{remark} Let $Z$ be a complex $3$-fold with a holomorphic map $\mu:Z\to |\sO(2)|$ such that the composite map to $\oP^1$ is surjective. Suppose further that $Z$ has a fibrewise $\sO(2)$-valued complex symplectic form $\omega$ and a real structure covering the natural real structure on $|\sO(2)|$. In other words, $Z$ fulfills all conditions of the twistor space of a hyperk\"ahler $4$-manifold with a tri-Hamiltonian Killing vector field, except the existence of sections with normal bundle $\sO(1)\oplus\sO(1)$. In principle, it could happen that $Z$ contains curves of degree $d>1$, but not of degree $1$ (although we do not know such an example). In this case $M_\mu^{[d]}$ is still well defined and a (pseudo)-hyperk\"ahler manifold, although $M$ does not exist. We shall still call
$M_\mu^{[d]}$ a  hyperk\"ahler transverse Hilbert scheme, since the construction requires only the existence of $Z$, not necessarily of $M$.\label{virtual} \end{remark}

We can characterise hyperk\"ahler transverse Hilbert schemes arising from $4$-manifolds with a locally free tri-Hamiltonian $\oR$-action as follows:
\begin{theorem} Let $M^{4d}$ be a hyperk\"ahler manifold, equipped with a compatible hyper-Poisson bivector $\Pi$, such that for every complex structure $I_\zeta$, the holomorphic Poisson bivectors $\Pi_1=\Omega_\zeta^{-1}$ and $\Pi_2=\Pi_\zeta^{2,0}$ satisfy the assumptions of Theorem \ref{tr-sympl}. Then there exists a complex $3$-fold $Z$ with properties listed in Remark \ref{virtual} such that $M^{4d}$ is locally isomorphic to $M_\mu^{[d]}$ as a hyper-Poisson hyperk\"ahler manifold. In addition, the holomorphic map $\mu:Z\to |\sO(2)|$ is a submersion.
\label{tr-hk}\end{theorem}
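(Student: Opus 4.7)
\emph{Plan.} The strategy is to invert the hyperk\"ahler transverse Hilbert scheme construction of \S\ref{Hts} by applying Theorem \ref{tr-sympl} fibrewise along the twistor sphere. Let $\sZ$ denote the twistor space of $M^{4d}$, with projection $\pi:\sZ\to\oP^1$. For each $\zeta\in \oP^1$ the pair $(\Omega_\zeta^{-1},\Pi_\zeta^{2,0})$ of holomorphic Poisson bivectors on the fibre $\pi^{-1}(\zeta)\simeq (M,I_\zeta)$ satisfies the hypotheses of Theorem \ref{tr-sympl}, and hence produces, locally, a symplectic surface $S_\zeta$ with a submersion to $\cx$. The task is to show that these surfaces fit together into a single complex $3$-fold $Z\to\oP^1$ satisfying the conditions of Remark \ref{virtual}, and that the resulting hyperk\"ahler transverse Hilbert scheme $M^{[d]}_\mu$ is locally isomorphic to $M$ as a hyper-Poisson hyperk\"ahler manifold.

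\emph{Construction of $Z$.} The polynomial $p(\zeta,\lambda)$ of \eqref{pol} may be regarded as a family of polynomials $p_z(\lambda)$, parametrised by $z\in\sZ$, whose coefficients at $z\in \pi^{-1}(\zeta)$ are the values of $p_1(\zeta),\dots,p_d(\zeta)$ at the point of $(M,I_\zeta)$ represented by $z$; this globalises the polynomials $\mu_m(\lambda)$ used in the proof of Theorem \ref{tr-sympl}. The next step is to form the incidence variety
$$\sT=\{(z,\lambda)\in \sZ\times_{\oP^1}|\sO(2)|:\, p_z(\lambda)=0\},$$
which, by the submersion hypothesis on the coefficients $p_1,\dots,p_d$, is smooth of complex dimension $2d+1$ and flat of degree $d$ over $\sZ$. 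The canonical closed $1$-form $\alpha_\lambda$ on $D_\lambda$ (see \eqref{normal}) used in the proof of Theorem \ref{tr-sympl} is available at every $\zeta\in \oP^1$; these assemble into a closed $1$-form on $\sT$ whose kernel defines a simple foliation $\sF$ of codimension $2$. The leaf space $Z$ inherits from $\sT$ a holomorphic projection to $\oP^1$ and, because $\lambda$ is constant along leaves of $\sF$, a holomorphic map $\mu:Z\to |\sO(2)|$; the fibrewise $\sO(2)$-valued complex symplectic form on $Z$ is obtained as $\alpha\wedge d\lambda$, exactly as in the proof of Theorem \ref{tr-sympl}.

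\emph{Reality and identification.} Reality of $\Pi$, built into Definition \ref{hyperP} and reinforced by Example \ref{d=1}, implies that $p(\zeta,\lambda)$ is compatible with the antipodal involution on $\oP^1$ and with the natural real structure on $|\sO(2)|$. Consequently $\sT$ carries a real structure that descends to $Z$, so that $Z$ satisfies the hypotheses of Remark \ref{virtual}. Submersivity of $\mu:Z\to |\sO(2)|$ is immediate from the submersion hypothesis on the coefficients of $p$. To identify $M$ locally with $M^{[d]}_\mu$, send $m\in M$ to the curve $C_m\subset Z$ obtained as the image, under $\sT\to Z$, of the preimage in $\sT$ of the twistor line of $m$ in $\sZ$; by construction $\mu(C_m)$ is cut out by $p(\zeta,\lambda)=0$, so $C_m$ verifies conditions (i) and (ii) of \S\ref{Hts}. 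Restricting to a fixed complex structure $I_\zeta$ recovers the bi-Poisson biholomorphism of Theorem \ref{tr-sympl}, and doing so simultaneously for all $\zeta\in\oP^1$ upgrades this to a local isomorphism of hyper-Poisson hyperk\"ahler manifolds.

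\emph{Main obstacle.} The hardest point will be to verify that the fibrewise leaf spaces $S_\zeta$ produced by Theorem \ref{tr-sympl} truly assemble into a smooth complex $3$-fold $Z$ holomorphically fibered over $\oP^1$, rather than into a merely differentiable family of symplectic surfaces. Concretely, one must ensure that the closed $1$-form $\alpha$ depends holomorphically on $\zeta$, so that the transverse holomorphic structure to $\sF$ on $\sT$ is compatible with variation along the twistor sphere. This is precisely where the condition ${\rm D}\Pi=0$ from \S\ref{HP} plays its essential role: it encodes the joint holomorphicity, in the $\oP^1$-direction, of the bivectors $\Pi_\zeta^{2,0}$, and hence of the Pfaffian polynomial $p$ and of the canonical $1$-form $\alpha$.
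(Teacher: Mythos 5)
Your proposal takes essentially the same approach as the paper, whose entire proof reads: ``We can perform the construction in the proof of Theorem \ref{tr-sympl} fibrewise on the twistor space of $M^{4d}$ and obtain $Z$. Its properties follow easily.'' Your elaboration --- the relative incidence variety over the twistor space, the reality structure, and the role of ${\rm D}\Pi=0$ in guaranteeing holomorphic dependence on $\zeta$ --- correctly fills in the details the paper leaves implicit.
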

\begin{proof} We can perform the construction in the proof of Theorem \ref{tr-sympl} fibrewise on the twistor space of $M^{4d}$ and obtain $Z$. Its properties follow easily.
\end{proof}
\begin{remark} This theorem remains true for pseudo-hyperk\"ahler $M^{4d}$. We do not know whether the induced metric on a hyperk\"ahler transverse Hilbert scheme is always positive definite.
\end{remark}
\begin{remark} The canonical Killing field $X_\Pi$ is transverse to the foliation defined in the proof of Theorem \ref{tr-sympl} (on each fibre of the twistor space). Therefore the vertical vector field on $Z$, which gives the projection to $|\sO(2)|$ is induced by $X_\Pi$. In particular, if $X_\Pi$ integrates to an action of $\oR$ or $S^1$ on $M^{4d}$ and the resulting $3$-fold $Z$ admits sections, then $Z$  is the twistor of a hyperk\"ahler $4$-manifold with a tri-Hamiltonian action of $\oR$ or $S^1$.
\end{remark}

\begin{remark} One can can consider, more generally, hyperk\"ahler transverse Hilbert schemes on $4$-manifolds $M$, the twistor space of which maps to $|\sO(2r)|$, $r>1$, rather than to $|\sO(2)|$. The corresponding object arising on $M_\mu^{[d]}$ is then a section  $\Pi$ of $\Lambda^2 E\otimes S^{2r} H$, satisfying ${\rm D}\Pi=0$. If we view $\Pi$ as a $\Lambda^2 E$-valued polynomial $\Pi(\zeta)$ of degree $2r$, then, for each $\zeta\in \oP^1$, $\Pi(\zeta)$ defines an $I_\zeta$-holomorphic Poisson bivector on $M_\mu^{[d]}$, compatible with $\Omega_\zeta^{-1}$.  Theorem \ref{tr-hk} remains true, as do the results of the next subsection (with obvious modifications). \label{tensor} 
\end{remark} 

\subsection{Linear geometry of quaternionic bivectors\label{linear}} 

Let $V$ be a real vector space of dimension $4n$, equipped with the standard flat quaternionic structure $(g,I_1,I_2,I_3)$. We denote the corresponding (linear) symplectic forms by $\omega_1,\omega_2,\omega_3$. The complexification $V^\cx$ decomposes as $E\otimes H$, where $E$ and $H$ have complex dimensions $2n$ and $n$, respectively, and are equipped with the standard quaternionic-Hermitian structure, i.e.\ complex symplectic forms $\omega_E$, $\omega_H$ and quaternionic structures $\sigma_E,\sigma_H$, so that $\omega_E(x,\sigma_E(x))>0$ and similarly for $H$.
\par
Let $\Pi$ be a bivector in $\Lambda^2 V$ belonging to $\Lambda^2 E\otimes S^2 H$. We define an endomorphism $A:V\to V$ by $ \#_\Pi\circ \#_g^{-1}$. Since $\Pi\in \Lambda^2 E\otimes S^2 H$, it is an eigenvector of the operator $\sum_{i=1}^3 I_i\otimes I_i$ with eigenvalue $-1$. Thus $A$ is an eigenvector of $\sum_{i=1}^3 I_i\otimes I_i$  on $V^\ast \otimes V$ with eigenvalue $1$.  Haydys \cite{Hay} calls such endomorphisms {\em aquaternionic} and shows that they are of the form $A=I_1A_1+I_2A_2+I_3A_3$, where each $A_i$ is quaternionic, i.e.\ it commutes with $I_1,I_2,I_3$. Since $\Pi$ is antisymmetric and real, the $A_i$ are quaternionic matrices which are quaternion-Hermitian, i.e.\ $A_i^\dagger=A_i$, where $\dagger$ denotes the quaternionic adjoint. In terms of the symplectic structures (cf.\ Example \ref{d=1}).
\begin{equation*}\Pi(\alpha,\cdot)=A_1\omega_1^{-1}(\alpha,\cdot)+A_2\omega_2^{-1}(\alpha,\cdot)+A_3\omega_3^{-1}(\alpha,\cdot).
\label{PiA}\end{equation*}
It follows that the $(2,0)$ component of $\Pi$ for the complex structure $I_1$ is 
$$ \Pi^{2,0}(\alpha,\cdot)=(A_2+iA_3)(\omega_2+i\omega_3)^{-1}(\alpha,\cdot),$$
and similarly for other complex structures. Viewing the $A_i$ as endomorphisms of $E$ (which corresponds to the canonical homomorphism  $\gl(n,\oH)\hookrightarrow \gl(2n,\cx)$) we obtain a quadratic endomorphism
$$A(\zeta)=(A_2+iA_3)+2iA_1\zeta-(A_2-iA_3)\zeta^2$$
of $E$, where each $A_i\in \gl(2n,\cx)$ is symmetric with respect to the symplectic form $\omega_{E}$.
We can consider the sheaf morphism $\eta-A(\zeta):E\otimes \sO(-2)\to E$ on $T\oP^1$. Its cokernel, which we denote by $\sF$, is a $1$-dimensional sheaf. Since each $A(\zeta)$ is symmetric with respect to the standard symplectic form on $\cx^{2n}$, the characteristic polynomial of $A(\zeta)$ is of the form $p(\zeta,\eta)^2$, where $p(\zeta,\eta)$ is a polynomial of degree $n$ in $\eta$. We call the scheme $C=\{(\zeta,\eta);p(\zeta,\eta)=0\}$ as the {\em spectral curve of the bivector $\Pi$} and view the sheaf $\sF$ as being supported on $C$.  If $C$ is smooth, then $\sF$ is a rank $2$ vector bundle with $\det \sF\simeq K_C(2)$ (cf.\ \cite{Beau2}, which contains more results on vector bundles arising this way). If the bivector $\Pi$ arises via the hyperk\"ahler transverse Hilbert scheme construction from a $3$-dimensional twistor space $Z$, then the spectral curve $C$ is precisely the curve in $Z$ corresponding to a point in $M_\mu^{[n]}$ and the sheaf $\sF$ is isomorphic to $\sN_{C/Z}(-1)$.
\begin{remark} One can associate to $\Pi$ another spectral object. Since the $A_i$ are quaternion-Hermitian matrices, they are diagonalisable over $\oH$ with real eigenvalues. Let us denote the product of eigenvalues of a quaternion-Hermitian matrix $X$ by $\det_{\oH} X$ (the so-called {\em Moore determinant}). We can define a surface in $\oR P^3$ as
$$S_{\oR}=\left\{[x_0,x_1,x_2,x_3]\in \oR P^3; \; \det{}_{\oH}\bigl(x_0-x_1A_1-x_2A_2-x_3A_3\bigr)=0\right\}.$$
It is a (ramified) $n$-fold of $\oR P^2$ (via $[x_0,x_1,x_2,x_3]\to [x_1,x_2,x_3]$), and its  complexification is a surface $S$ in $\cx P^3$ which can be defined as 
$$ S=\left\{[z_0,z_1,z_2,z_3]\in \oC P^3; \; \det\bigl(z_0-z_1A_1-z_2A_2-z_3A_3\bigr)=0\right\},$$
where the $A_i$ are now complex $2n\times 2n$ matrices via the  homomorphism  $\gl(n,\oH)\hookrightarrow \gl(2n,\cx)$.  The   intersection of $S$ with the quadratic cone 
$$ x_1=2i\zeta, \enskip x_2=1-\zeta^2,\enskip x_3=i(1+\zeta^2)$$
is the doubled spectral curve $C$.
The sheaf $\sF$ extends to $S$ and is defined as the cokernel of 
$$ z_0-z_1A_1-z_2A_2-z_3A_3: E\otimes \sO(-2)\to E$$ on $\oP^3$. At present we do not understand the significance of $S_\oR$ (as opposed to $C$) for the geometry of hyper-Poisson manifolds.\end{remark}

\section{The hyper-Poisson bivector of the monopole moduli space}

We consider the moduli space $\sM_k$ of $SU(2)$-monopoles of charge $k$, described as the moduli space of $\fU(k)$-valued solutions of Nahm's equations on $(0,2)$, with simple poles at $t=0,2$ and residues defining the standard irreducible representation of $\su(2)$. The Nahm equations are $\dot{T}_1=[T_1,T_0]+[T_2,T_3]$ and two further equations, obtained by cyclic permutations of indices $1,2,3$. The tangent space at $[T_0,T_1,T_2,T_3]$ is given by quadruples of smooth maps $(t_0,t_1,t_2,t_3)$ from $[0,2]$ to $\fU(k)$ satisfying the equations
\begin{eqnarray*} \dot{t}_0 & = & [t_0,T_0]+ [t_1,T_1]+ [t_2,T_2]+ [t_3,T_3]\\
\dot{t_1} & = & [T_1,t_0]+[t_1,T_0]+[T_2,t_3]+[t_2,T_3]\\\
\dot{t_2} & = & [T_2,t_0]+[t_2,T_0]+[T_3,t_1]+[t_3,T_1]\\
\dot{t_3} & = & [T_3,t_0]+[t_3,T_0]+[T_1,t_2]+[t_1,T_2].
\end{eqnarray*}
The hypercomplex structure is given by the right multiplication by quaternions on $t_0+t_1i+t_2j+t_3k$ and the Riemannian metric $g$ is 
$$ \|(t_0,t_1,t_2,t_3)\|^2=-\int_0^2\tr(t_0^2+t_1^2+t_2^2+t_3^2).
$$
As explained by Atiyah and Hitchin in \cite[Ch.6]{AH}, the hyperk\"ahler manifold $\sM_k$ is the hyperk\"ahler transverse Hilbert scheme associated to $S^1\times \oR^3$.
Thus, according to \S\ref{Hts}, it posseses a natural hyper-Poisson bivector compatible with the hyperk\"ahler structure. We can identify this bivector as follows:
\begin{theorem} The natural hyper-Poisson bivector $\Pi$ on the moduli space $\sM_k$ is given by
\begin{equation*} \#_g^{-1}\Pi = \frac{-i}{4}\int_0^2\tr\Bigl(\sum_{i=1}^3T_i\bigl(dT_i\wedge dT_0-dT_0\wedge dT_i\bigr)+
\sum_{i,j,k=1}^3\epsilon_{ijk}T_idT_j\wedge dT_k\Bigr),\label{formula}\end{equation*}
where $(\phi\wedge \psi)(a,b)=\phi(a)\psi(b)-\phi(b)\psi(a)$.
\label{lotsofT}
\end{theorem}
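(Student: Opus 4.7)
The strategy is to verify the formula by matching the $(2,0)$-components of both sides in a single complex structure and invoking uniqueness. By the construction in \S\ref{Hts}, the hyper-Poisson bivector $\Pi$ on $\sM_k$ is determined by its $(2,0)$-component $\Pi^{2,0}_\zeta$ in any one complex structure $I_\zeta$: a section of $\Lambda^2 E\otimes S^2H$ is recovered from its value at a single $\zeta\in\oP^1$ via the $Sp(1)$-equivariance intrinsic to $S^2H$. The right-hand side of the claimed formula is manifestly $SO(3)$-equivariant under simultaneous rotation of $(T_1,T_2,T_3)$ (through the invariants $\sum T_i^2$ and the Levi--Civita symbol), and this $SO(3)$-equivariance is precisely the $Sp(1)$-equivariance rotating the complex structures. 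It therefore suffices to verify that the claimed 2-form $\omega_\Pi:=\#_g^{-1}\Pi$ has the correct $(0,2)$-component with respect to $I_1$, matching the dual of $\Pi^{2,0}_1$.

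Introduce the $I_1$-holomorphic coordinates $\alpha(t)=T_0(t)+iT_1(t)$ and $\beta(t)=T_2(t)-iT_3(t)$ on the Nahm data, so that the holomorphic symplectic form is $\Omega_1=-\int_0^2\tr(d\alpha\wedge d\beta)$ up to a constant, and
\[
(dT_0)^{0,1}=\tfrac12 d\bar\alpha,\ \ (dT_1)^{0,1}=\tfrac{i}{2}d\bar\alpha,\ \ (dT_2)^{0,1}=\tfrac12 d\bar\beta,\ \ (dT_3)^{0,1}=-\tfrac{i}{2}d\bar\beta.
\]
Two observations simplify the calculation. First, the matrix-trace identity
\[
\tr\bigl(T_i(dT_i\wedge dT_0-dT_0\wedge dT_i)\bigr)=\tr\bigl(d(T_i^2)\wedge dT_0\bigr),
\]
which follows from cyclicity of the matrix trace combined with antisymmetry of $\wedge$, collapses the first sum into $\tr\bigl(d(T_1^2+T_2^2+T_3^2)\wedge dT_0\bigr)$; the $T_1^2$ contribution to the $(0,2)$-part vanishes because $d\bar\alpha\wedge d\bar\alpha=0$. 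Second, a direct check shows that the $(0,2)$-part of the $\epsilon_{ijk}$-sum contributes an equal amount to $\omega_\Pi^{0,2}$. The result is
\[
\omega_\Pi^{0,2}=\frac{i}{4}\int_0^2\tr\bigl(\beta\cdot d\bar\alpha\wedge d\bar\beta\bigr).
\]

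It remains to identify the right-hand side with the Hilbert-scheme prescription. By \S\ref{symplectic}, $\Pi^{2,0}_1$ is the holomorphic Poisson bivector on $(\sM_k,I_1)$ induced from the second Poisson bivector $w\cdot\omega_S^{-1}$ on the fibre $S_1\simeq\cx^\ast\times\cx$ of the twistor space of $S^1\times\oR^3$, where $w$ is the projection to $\cx$. Under the Nahm--monopole correspondence the spectral curve in $S_1$ is cut out by $\det(\eta-\beta(0))=0$, so the Hilbert-scheme $w$-coordinates become the eigenvalues of $\beta(0)$, and the recursion operator ``multiplication by $w$'' (Remark \ref{local}) translates on the Nahm side to matrix multiplication by $\beta$ on tangent vectors. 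Consequently $\Pi^{2,0}_1=-\tfrac{i}{4}\beta\cdot\Omega_1^{-1}$; using the hyperk\"ahler duality $\#_g^{-1}(\Omega_1^{-1})=-\bar\Omega_1=-\int_0^2\tr(d\bar\alpha\wedge d\bar\beta)$, extended $\cx$-linearly to accommodate the matrix factor, we obtain $\#_g^{-1}(\Pi^{2,0}_1)=\tfrac{i}{4}\int_0^2\tr(\beta\cdot d\bar\alpha\wedge d\bar\beta)$, matching the computation above. The main technical obstacle is precisely this last identification---namely, that the Hilbert-scheme recursion ``multiplication by $w$'' corresponds on the Nahm side to matrix multiplication by $\beta$---which one handles by passing to action-angle coordinates on the open dense subset where $\beta(0)$ has distinct eigenvalues and applying Theorem \ref{tr-sympl} fibrewise on the twistor space.
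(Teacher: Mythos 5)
Your overall strategy---compute the $(0,2)$-component of the claimed $2$-form in a single complex structure, identify it with the dual of the transverse-Hilbert-scheme bivector, and propagate by the $SO(3)$-symmetry of the integrand---has the same skeleton as the paper's proof, which likewise works explicitly only with $I=I_1$ and appeals to the symmetric shape of the formula for the remaining complex structures. But the step you flag as ``the main technical obstacle'' is in fact the content of the theorem, and your plan for it does not close. Asserting that the recursion operator ``multiplication by $w$'' becomes ``matrix multiplication by $\beta$'' on Nahm data, so that $\Pi^{2,0}_1=c\,\beta\cdot\Omega_1^{-1}$, and then invoking Theorem \ref{tr-sympl} in action-angle coordinates, is circular: Theorem \ref{tr-sympl} reconstructs an abstract surface from a given bi-Poisson pair, but it cannot tell you that the specific bivector $\beta\cdot\Omega_1^{-1}$ on the Nahm moduli space is the one induced by the identification of $(\sM_k,I_1)$ with rational maps. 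The missing argument is exactly the paper's: convert $\#_g^{-1}$ into $\#_{\Omega}^{-1}$ via the identity $(\#_\Omega^{-1}\pi)(u,v)=(\#_g^{-1}\pi)(Ju+iKu,Jv+iKv)$ (your ``$\#_g^{-1}(\Omega_1^{-1})=-\bar\Omega_1$ extended $\cx$-linearly to accommodate the matrix factor'' is not a legitimate operation), recognise the result as $\tfrac12\int_0^2\tr\, d(\beta^2)\wedge d\alpha$, note that $(\beta^2,\alpha)$ again satisfies the Lax equation, and apply the singular complex gauge transformation killing $\alpha$ to evaluate the integral as $\sum\beta_j\,d\beta_j\wedge dp_j/p_j$ in rational-map coordinates. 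Without this (or an equivalent) computation, the identification of $\Pi^{2,0}_1$ in Nahm variables is unproven. A related computational caveat: for matrix-valued $1$-forms, $\tr(\beta\, d\bar\alpha\wedge d\bar\beta)$ and $-\tr(\beta\, d\bar\beta\wedge d\bar\alpha)$ differ by commutator terms, so your compressed expression for $\omega_\Pi^{0,2}$ cannot replace the symmetrised combination $d\bar\beta\wedge d\bar\alpha-d\bar\alpha\wedge d\bar\beta$ that actually appears.

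There is a second gap of principle. Every bivector in $B^2=S^2E\otimes\Lambda^2H$ has vanishing $(2,0)$- and $(0,2)$-parts for \emph{every} complex structure, so matching $(0,2)$-components---even for all $\zeta\in\oP^1$ via equivariance---only pins down the $\Lambda^2E\otimes S^2H$-component of the right-hand side and cannot exclude a nonzero $B^2$-component. (Your opening claim that a section of $\Lambda^2E\otimes S^2H$ is recovered from its value at a single $\zeta$ is also false as stated---it is a quadratic polynomial in $\zeta$---and it is only the equivariance of both sides that legitimises the reduction to one complex structure.) The paper closes this by exhibiting the formula as $\tfrac12\bigl(\Pi^{2,0}_{I}+\Pi^{2,0}_{-I}+\Pi^{2,0}_{J}+\Pi^{2,0}_{-J}+\Pi^{2,0}_{K}+\Pi^{2,0}_{-K}\bigr)$ and invoking Salamon's Lemma 4.2; that step, together with the verification that the integral converges despite the poles of the $T_i$ at $t=0,2$ (which in the paper falls out of the same decomposition), is absent from your proposal and must be supplied.
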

\begin{remark} It will follow from the proof that the integral is finite.
\end{remark}
\begin{proof} We first compute the K\"ahler forms $\omega_2$ and $\omega_3$ corresponding to the complex structures $J$ and $K$:
$$\omega_2=-\int_0^2\tr\Bigl(dT_0\wedge dT_2+dT_1\wedge dT_3),\quad \omega_3=-\int_0^2\tr\Bigl(dT_0\wedge dT_3+dT_2\wedge dT_1).$$
The $I$-holomorphic $2$-form $\omega_2+i\omega_3$ is therefore given by  
$$\omega_2+i\omega_3=-\int_0^2\tr d(T_0-iT_1)\wedge d(T_2+iT_3).$$
We can now rewrite the integrand in  the formula for $\#_g^{-1}\Pi$ as follows:
\begin{multline*} 
\frac{1}{2}\tr\bigl((T_2+iT_3)(\Phi_2-i\Phi_3)+\\+2T_1(dT_1\wedge dT_0-dT_0\wedge dT_1+dT_2\wedge dT_3-dT_3\wedge dT_2)+\\+(T_2-iT_3)(\Phi_2+i\Phi_3)\bigr),
\end{multline*}
where $$\Phi_2=dT_2\wedge dT_0-dT_0\wedge dT_2+dT_3\wedge dT_1-dT_1\wedge dT_3,$$ $$\Phi_3=dT_3\wedge dT_0-dT_0\wedge dT_3+dT_1\wedge dT_2-dT_2\wedge dT_1.$$
Observe that the first summand is of type $(0,2)$ for the complex structure $I$, the second one of type $(1,1)$, and the third one of type $(2,0)$. Since $\#_g$ exchanges $(2,0)$
and $(0,2)$, we conclude that 
\begin{multline*}\#_g^{-1}\Pi^{2,0}=\frac{-i}{8}\int_0^2\tr(T_2+iT_3)(\Phi_2-i\Phi_3)=\\ =\frac{-i}{8}\int_0^2\tr(T_2+iT_3) \bigl(d(T_2-iT_3)\wedge d(T_0+iT_1)-d(T_0+iT_1)\wedge d(T_2-iT_3)\bigr).\end{multline*}
We now observe that for any bivector $\pi$ and $\Omega=\omega_2+i\omega_3$:
\begin{equation} \Bigl(\#_\Omega^{-1}\pi\Bigr)(u,v)=\Bigl(\#_g^{-1}\pi\Bigr)(Ju+iKu,Jv+iKv).\label{key}\end{equation}
Computing this for $\pi=\Pi^{2,0}$ we obtain:
\begin{equation*}\#_\Omega^{-1}\Pi^{2,0}=\frac{-i}{2}\int_0^2\tr(T_2+iT_3)\bigl(d(T_2+iT_3)\wedge d(T_0-iT_1)-d(T_0-iT_1)\wedge d(T_2+iT_3)\bigr).\label{almost}
\end{equation*}
Thus, if we set $\beta=T_2+iT_3$ and $\alpha=T_0-iT_1$, we obtain
\begin{equation}i\#_\Omega^{-1}\Pi^{2,0}= \frac{1}{2}\int_0^2\tr d\bigl(\beta^2\bigr)\wedge d\alpha.\label{closed}\end{equation}
The complex Nahm equation is the Lax equation $\dot\beta=[\beta,\alpha]$. It follows that $(\beta^2,\alpha)$ also satisfies the Lax equation. Acting by a singular complex gauge transformation which makes $\alpha$ equal to zero (and, consequently, $\beta$ constant) implies that
$$i\#_\Omega^{-1}\Pi^{2,0}=\frac{1}{2}\sum d(\beta_j)^2\wedge \frac{dp_j}{p_j}=\sum\beta_jd\beta_j\wedge \frac{dp_j}{p_j},
$$
where  $\beta_j$ are the poles and $p_j$ the values of the numerator of the rational map corresponding to the given monopole (and the complex structure $I$). This means that $\Pi^{2,0}$ is precisely the holomorphic bivector obtained from the transverse Hilbert scheme construction applied to $\cx\times \cx^\ast$ with the symplectic form $d\beta \wedge \frac{dp}{p}$.
\par
Observe now that, since $\#_g^{-1}\Pi^{2,0}=\bigl(\#_g^{-1}\Pi\bigr)^{0,2}$ and $Ju+iKu=(Ju)+iI(Ju)$, the formula \eqref{key} implies that 
\begin{equation*}\bigl(\#_g^{-1}\Pi\bigr)^{0,2}(u,v)=\#_\Omega^{-1}\Pi^{2,0}(-Ju,-Jv).\label{yes}\end{equation*}
Therefore the integral  defining $\bigl(\#_g^{-1}\Pi\bigr)^{0,2}$ is finite. Similarly, the integral  defining $\bigl(\#_g^{-1}\Pi\bigr)^{2,0}$ is finite. Since $\bigl(\#_g^{-1}\Pi\bigr)^{0,2}+\bigl(\#_g^{-1}\Pi\bigr)^{2,0}$  is the sum of all terms of the form $T_2\cdot \phi$ and $T_3\cdot \phi$ in the formula in the statement, repeating this decomposition for  the complex structure $J$ or $K$ shows that the whole integral in the statement is finite. This also shows that
$$\Pi=\frac{1}{2}\Bigl(\Pi^{2,0}_{I}+\Pi^{2,0}_{-I}+\Pi^{2,0}_{J}+\Pi^{2,0}_{-J}+\Pi^{2,0}_{K}+\Pi^{2,0}_{-K}\Bigr),$$
and so, owing to Lemma 4.2 in \cite{Sal}, $\Pi\in \Gamma(\Lambda^2E\otimes S^2H)$. 
Finally, observe that $\Pi$ is real.
\end{proof}
\begin{remark} The canonical Killing vector field $X_\Pi$ (see \S\ref{HP}) on the moduli space $\sM_k$ is $(t_0,t_1,t_2,t_3)=(i,0,0,0)$. We obtain
$$ i(X_\Pi)\#_g^{-1}\Pi=-\frac{1}{2}\int_0^2\tr\sum_{i=1}^3T_idT_i=-\frac{1}{4}dF,$$
where
$$F=\int_0^2\Bigl(\tr\sum_{i=1}^3 T_i^2+\frac{k(k^2-1)}{4}\bigl(s^{-2}+(s-2)^{-2}\bigr)\Bigr).$$
The function $F$ has been shown by Hitchin \cite{Hit} to essentially give K\"ahler potentials of the monopole metric: for every complex structure, the sum of $F$ and some linear combination of the coefficients of the spectral curve is a K\"ahler potential for the corresponding K\"ahler form. Is this true on a general hyperk\"ahler transverse Hilbert scheme $M_\mu^{[d]}$, i.e.\ is the $1$-form $ i(X_\Pi)\#_g^{-1}\Pi$ similarly related to K\"ahler potentials on  $M_\mu^{[d]}$?
\end{remark}

\end{document}